\newtheorem{thm}{Theorem}[section]
\newtheorem{lem}[thm]{Lemma}
\newtheorem{lemma}[thm]{Lemme}
\newtheorem{prop}[thm]{Proposition}
\theoremstyle{definition}
\newtheorem{defn}[thm]{Definition}
\theoremstyle{remark}
\newtheorem{rem}[thm]{Remark}
\newtheorem{example}[thm]{Example}
\numberwithin{equation}{section}
\newcommand{\norm}[1]{\left\Vert#1\right\Vert}
\newcommand{\eps}{\varepsilon}
\newcommand{\E}{\mathcal{E}}
\newcommand{\F}{\mathcal{F}}
\newcommand{\Haus}{\mathcal{H}}
\newcommand{\Leb}{\mathcal{L}}
\newcommand{\Saut}{J}
\newcommand{\dl}{d_{l^1}}
\newcommand{\W}{\mathrm{W}}
\newcommand{\N}{\mathbb{N}}
\newcommand{\R}{\mathbb{R}}
\newcommand{\barom}{\overline{\Omega}}
\newcommand{\om}{\Omega}
\newcommand{\dom}{{\partial \Omega}}
\def\build#1_#2^#3{\mathrel{\mathop{\kern 0pt#1}\limits_{#2}^{#3}}}
\begin{document}
\title{A particular class of solutions of a system of eikonal equations}

\author{Thierry Champion and Gisella Croce}
\address{D\'epartement de Math\'ematiques Universit\'e du Sud Toulon-Var, 
D\'epar\-te\-ment de Math\'e\-ma\-ti\-ques Universit\'e de Montpellier II}
\email{champion@univ-tln.fr, croce@math.univ-montp2.fr}
\thanks{The research of Gisella Croce was partially supported by the {\it Fonds National Suisse}.}
\subjclass{28A75, 34A60, 35F30, 35A15, 49J45}
\keywords{almost everywhere solutions, eikonal equation, 
functions of bounded variation, direct methods of the calculus of variations}
\begin{abstract}
In this paper we study the following Dirichlet problem:
\[
\left\{
\begin{array}{ll}
\displaystyle \left|\frac{\partial u}{\partial x_i}\right|=1\,\,
&\mbox{a.e. in} \,\Omega,\, i=1,\ldots,N
\\
u=0 & \mbox{on} \,\,\, \partial \Omega,
\end{array}
\right. 
\]
where $\Omega$ is an open bounded subset of $\R^N$ and 
$u \in W^{1,\infty}(\Omega)$. 
This problem
has infinitely many solutions: our aim is to isolate 
those which minimize some
functional involving the discontinuity
sets of $\frac{\partial u}{\partial x_i}$, for $i=1,\ldots,N$.
\end{abstract}

\maketitle


\section{Introduction}

In this paper we are interested in the following system of eikonal equations
\begin{equation}
\left\{
\begin{array}{ll}
\displaystyle \left|\frac{\partial u}{\partial x_i}\right|=1\,\,
&\mbox{a.e. in} \,\Omega,\, i=1,\ldots,N
\vspace{0.1cm}\\
u=0 & \mbox{on} \,\,\, \partial \Omega,
\end{array}
\right. \label{pbdipart}
\end{equation}
where $\om$ is a connected open bounded subset of $\R^N$ with Lipschitz boundary
and $u \in \W^{1,\infty}(\om) \cap C_0(\om)$.
This problem admits infinitely many solutions 
(see \cite{Cellina} and \cite{implicit} for example for a proof). 
An interesting question is then to isolate a particular class of solutions
which 
have some additional properties, 
since such an analysis could be useful to select 
and characterize a prefered solution.

There exist some evident difficulties to this kind of question. 
For example the lack of convexity of the set $\mathfrak{S}(\Omega)$ 
of the solutions 
of (\ref{pbdipart}) implies that the natural functionals 
$$
v \to \int_{\Omega}|v|^p\,\quad p\geq 1, 
$$
have in general neither  a minimizer nor a maximizer 
over $\mathfrak{S}(\Omega)$. 
Indeed any minimizing sequence converges to 0, which does not belong to $\mathfrak{S}(\Omega),$ 
and the cluster points of maximizing sequences are the distance function to $\partial \Omega$ 
(in the $l^1$ norm of $\R^N$) or its opposite, which usually do 
not verify (\ref{pbdipart}) (see remark \ref{gradient_distance}).

Another natural way to select a particular solution in $\mathfrak{S}(\Omega)$
would be to use the notion of viscosity solution 
(for the definition and further details see \cite{implicit}). 
Indeed one could study the existence of 
$W^{1,\infty}(\Omega)$ viscosity solutions of a problem like
\begin{equation}
\left\{
\begin{array}{ll}
\displaystyle F(Du)=0\,\,
&\mbox{a.e. in} \,\Omega
\vspace{0.1cm}\\
u=0 &\mbox{on}\, \partial \Omega
\end{array}
\right.\label{viscointroduction}
\end{equation}
where $F:\R^N\to \R$ is such that $F(x)=0$ if and only if $x=(x_1,\ldots,x_N)$ verifies $|x_i|=1\,\,\, \forall\,i=1,\ldots,x_N$.
This is indeed possible in dimension $N=1$, where the only viscosity solution of the problem 
\begin{equation}
\left\{
\begin{array}{ll}
\displaystyle \left|u'\right|-1=0\,\,
&\mbox{a.e. in} \,]-1,1[
\vspace{0.1cm}\\
u(-1)=u(1)=0&
\end{array}
\right.\label{pbdim1}
\end{equation}
 is $u(x)=1-|x|$.
Nevertheless when $N\geq 2$
there is in general no viscosity solution of (\ref{viscointroduction}), 
unless $\Omega$ is convex and the normals to $\partial \Omega$ are in the set $E=\{x: |x_i|=1/\sqrt{N}\};$   
we refer to \cite{CDGG} and \cite{Giovanni} for further details.

In the literature we find few articles 
with the purpose of 
isolating particular solutions of problem (\ref{pbdipart}) and 
selecting one. 
In \cite{pan} and \cite{daco-glow} the authors study 
numerically a variational problem over the set of non-negative 
solutions of (\ref{pbdipart}) in the 2-dimensional case: 
they obtain a maximizing sequence for the problem
\begin{equation}\label{glow}
\sup\left\{\int_\Omega u, \quad u\geq 0,\,\, u \in \mathfrak{S}(\Omega) 
\right\}
\end{equation}
through the numerical minimization of the functional
$$
u \mapsto -\int_\Omega u + \frac{1}{2}\int | \nabla u |^2 
+\frac{\eps}{2}\int |\Delta u|^2 + \frac{1}{2\eps}
\sum_{i=1}^{N} \int\left(\left|\frac{\partial u}{\partial x_i}\right| -1 \right)^2\,.
$$
Unfortunately, as we said above, there is in general no optimal solution 
for the variational problem (\ref{glow}), since the limit for any maximizing
sequence is the distance to $\partial \Omega$ (in the $l^1$ norm).

A different approach is followed in \cite{daco-marc-visc} where
the authors
construct an explicit solution. Indeed they
 define a special
partition of $\Omega$ made up of convex sets whose normals to $\partial \Omega$
 are in $E$; over each of these sets
they consider the relative viscosity solution. 
In this way the constructed solution has local properties (which depend on the
chosen partition), but no global ones. 
In any case we want to point out that this study is made in a more general 
framework than problem (\ref{pbdipart}).

Our purpose in this work is to characterize a particular 
class of solutions of 
(\ref{pbdipart}) through a variational method, 
i.e., as the optimal solutions of a variational problem. We
shall define over 
$\mathfrak{S}(\Omega)$ a functional involving
 the discontinuity sets of the gradient of the solutions of 
(\ref{pbdipart}) and we minimize it.
Let us briefly explain the kind of results that we shall present 
in this article. 
We distinguish two cases, according to the properties of $\Omega$. 
In the case where $\partial \Omega$ is composed by a finite set of faces
with normals in $E$ (see section \ref{sec-part-case}), we 
consider the functional
$$
v \,\,\mapsto\,\, \sum^N_{i=1} \Haus^{N-1}({J}_{\frac{\partial v}{\partial x_i}})
$$
(where ${J}_{\frac{\partial v}{\partial x_i}}$ is the 
jump set of $\frac{\partial v}{\partial x_i}$) and
 we show that it admits a minimizer.

In the general case, that is, when $\Omega$ is any connected open subset of $\R^N$ with Lipschitz boundary, 
we cannot minimize the discontinuity sets of $Du$ for $u \in \mathfrak{S}(\Omega)$
in the same way. Indeed the previous functional is in general infinite over $\mathfrak{S}(\Omega)$, as shown in section \ref{sec-gen-case}.
Consequently 
we construct a positive ``weight'' function $h \in C_0(\Omega)$ such that
the functional
$$
v \to\sum^N_{i=1}\int_{{J}_{\frac{\partial v}{\partial x_i}}}
h(x) d\mathcal{H}^{N-1}(x)
$$
admits  a minimizer over $\mathfrak{S}(\Omega)$. 

We also study the relationship between the viscosity solution (when it exists) 
and the optimal solutions of the minimization problem: we show that they coincide if $N=1,2$
(see Proposition \ref{viscosity_distance}).

Our study allows us to isolate particular solutions 
of problem (\ref{pbdipart}), but
unfortunately we are not able to say if it selects a particular one,
in the sense that the variational problems we propose could admit more than one solution.
In section \ref{sec_open_problems} we then present a method to select a function
among the optimal solutions of the variational problems that we studied (see {\bf 4.}).

Finally we refer to a recent work \cite{bellettini} which does not address the problem
of the selection of a particular solution of (\ref{pbdipart}) but is also concerned
with some local properties of the discontinuity sets of $Du, u \in \mathfrak{S}(\Omega)$.


\section{Notations}

Throughout this paper $\Omega$ denotes a bounded connected open subset
of $\R^N$ with Lipschitz boundary. 
We will denote by $(x_i)_{1 \leq i \leq N}$ the coordinates of a point $x \in \R^N$, and
$\langle x,y \rangle$ the usual scalar product of $x,y \in \R^N$. 
The norm $\norm{x}_{l^1}$ of $x\in \R^N$ stands for the usual $l^1$ norm,
that is $\norm{x}_{l^1}=\sum_{i=1}^N |x_i|\,.$

In the following, we shall use the short notation $W^{1,\infty}_0(\om)$ for
$W^{1,\infty}(\om) \cap C_0(\om)$. When considering a function $v \in W_0^{1,\infty}(\om)$, we shall
always assume that we are handling its continuous representative; 
notice that, due to the hypothesis on $\Omega,$ 
such a function $v$ is a Lipschitz continuous function.

Let us briefly recall some properties and notations of the spaces $BV$ and $SBV$ for future use 
(in any case we refer to \cite{ambrosio_fusco_pallara} and to \cite{evans-gariepy} for further details).
If $w \in BV(\Omega)$, we write $\norm{w}_{BV(\Omega)}$ 
for the usual $BV$ norm of $w$ defined by
$$
\norm{w}_{BV(\Omega)}=\norm{w}_{L^1(\Omega)}+|Dw|(\Omega)\,.
$$
Moreover we will use the usual weak* convergence in $BV$ defined as follows (see \cite{ambrosio_fusco_pallara}):
\begin{defn}[weak* convergence in $BV$]
Let $(u_n)_n, u \in BV(\Omega)$. We say that $(u_n)_n$ weakly* converges to $u$ in $BV(\Omega)$ if
$u_n \to u$ in $L^1(\Omega)$ and the measures $Du_n$ weakly* converge to the measure $Du$ in
$\mathcal{M}(\Omega, \R^N)$, that is
$$
\lim\limits_{n \to \infty}\int_{\Omega}\varphi dD u_n=
\int_{\Omega}\varphi dD u \qquad \forall\,\varphi \in C_0(\Omega)\,.
$$
\end{defn}
A $BV$ function belongs to $SBV$ when the Cantor part of its derivative is zero.
This implies that the measure $Dw$ of an $SBV(\Omega)$ function 
can be decomposed in
the following way:
\begin{eqnarray*}
Dw & = & D^a w + D^j w \\
& = & \nabla w \, \mathcal{L}^N+(w^+-w^-)\nu_w\,\mathcal{H}^{N-1}\lfloor \Saut_w
\,\, = \,\,
\nabla w\mathcal{L}^N+[w]\nu_w\mathcal{H}^{N-1}\lfloor \Saut_w
\end{eqnarray*}
where $D^a w$ is the absolute continuous part of $D w$ with respect to
the Lebesgue measure $\Leb^N$ in $\R^N$, $D^j w = [w]\nu_w\mathcal{H}^{N-1}\lfloor \Saut_w$
is the {\it jump} part of $Dw$,
$\Haus^{N-1}$ the Hausdorff measure of dimension $N-1$,
$w^+$ and $w^-$ denote the upper and lower approximate limits of $w$,
$J_w$ the jump set of $w$ and $\nu_w$ its generalized normal.

Referring to our problem, we will denote by
$\mathfrak{S}(\Omega)$ the set of $W^{1,\infty}(\Omega)$ solutions of 
(\ref{pbdipart}), that is
$$
\mathfrak{S}(\Omega):=\left\{ v \in W^{1,\infty}_0(\om) \, : \,\left|\frac{\partial v}{\partial x_i}\right|=1
\,\, a.e. \,\,in\,\, \om, i=1,\ldots,N \right\}\,.
$$
In this paper we will consider the piecewise $C^1$ elements 
$v$ of $\mathfrak{S}(\Omega)$ and more generally those elements $v$
of $\mathfrak{S}(\Omega)$ such that $Dv \in SBV_{loc}(\Omega)^N$.
To define a  piecewise $C^1$ function, 
we first recall the definition of Caccioppoli partitions and 
piecewise constant function (we refer to \cite{ambrosio_fusco_pallara} for further details):
\begin{defn}[Caccioppoli partition]
Let $I\subset \N$. A partition  $\{E_i\}_{i\in I}$ 
of $\Omega$ is a Caccioppoli partition if
$\displaystyle \sum\limits_{i\in I}|D\chi_{E_i}|(\Omega)<\infty$.
\end{defn}
In the above definition, $\chi_A$ denotes the characteristic function of a set $A$, that is
$\chi_A(x)=1$ when $x \in A$, $\chi_A(x)=0$ when $x \notin A$.
\begin{defn}[Piecewise constant function]
\label{piecewise_constant_maps_definition}
We say that $u: \Omega \to \R$ is piecewise constant in 
$\Omega$ if there exist a Caccioppoli partition $\{E_i\}_{i\in I}$ 
of $\Omega$  
and a function $t: I\to \R$ 
such that $\displaystyle u=\sum\limits_{i\in I}t_i\chi_{E_i}$.
\end{defn}
Notice that a piecewise constant function is an $SBV$ function (see \cite{ambrosio_fusco_pallara} for further details).

We  define the set of piecewise $C^1$ functions by
$$
C^1_{pw}(\Omega):=\left\{v \in W^{1,\infty}(\om): \frac{\partial v}{\partial x_i}\, \textnormal{is piecewise constant in $\Omega$}, 
i=1,\ldots,N\right\}\,.
$$

As has  already been said, we shall handle the solutions of (\ref{pbdipart}) such that $Dv \in SBV_{loc}(\Omega)^N$.
We now want to write their distributional gradient: for any index $i \in \{1,\ldots,N\}$ we  use the short notation
$\Saut^{v}_i$ instead of 
$\Saut_{\frac{\partial v}{\partial x_i}}$.
Therefore
for a function 
$v\in \mathfrak{S}(\Omega)$ such that $Dv \in SBV_{loc}(\Omega)^N$ one has
\begin{equation}\label{DVrepresentation}
D\left(\frac{\partial v}{\partial x_i}\right)
\,\, = \,\, 2 \, \nu_{\frac{\partial v}{\partial x_i}} \,\Haus^{N-1} \lfloor \Saut^v_i
\quad on \,\, \omega\quad \forall\, i=1,\ldots,N\,
\end{equation}
for any open subset $\omega \subset \overline{\omega}\subset \om$.

In the rest of the paper, the distance in $l^1$ norm to a  subset of $\R^N$ will
play an important role: if $A$ is an open bounded subset of $\R^N$, we define
$\dl(\cdot\,,A): \R^N \to \R$ by 
$$
\forall x \in \R^N, \quad \quad \quad
d_{l^1}(x,A) \,\, := \,\, \min\left\{ \|x-y\|_{l^1} \, : \, y \in A \right\}\,.
$$

\section{A particular case} \label{sec-part-case}
In this section we will assume that $\Omega$
satisfies the following property:
$$
(H) \quad \quad
\left\{\begin{array}{l}
\mbox{the boundary $\dom$ is the union of a finite family of polyhedrons} \\
\mbox{of dimension $N-1$, each of which is included in a hyperplane}\\
\mbox{with normals in $E$,}
\end{array}\right.
$$
where $E := \{ x \in \R^N\,:\, \forall i,\,\, |x_i|={1}/{\sqrt{N}} \}$, and we will minimize the $\mathcal{H}^{N-1}$ measure of the discontinuity sets of $\frac{\partial v}{\partial x_i},$ for $v \in \mathfrak{S}(\Omega).$
We  define
$$
\mathcal{E}_{pw}(\Omega):=
\{v \in \mathfrak{S}(\Omega):  v \in C^1_{pw}(\Omega)\}\,.
$$
We now state the main result of this section.
\begin{thm}\label{existence_particular_case}
Let $\Omega$ be an open bounded connected subset of $\R^N$ which satisfies hypothesis $(H)$. Then the variational problem
\[
(P_1) \quad \quad \quad \inf\left\{\,\F(v) \,:= \, \sum_{i=1}^N \Haus^{N-1}(\Saut_i^v)
\, : \, v \in \E_{pw}(\om) \right\}.
\]
has finite value and admits an optimal solution.
\end{thm}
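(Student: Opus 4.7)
The plan is to apply the direct method of the calculus of variations: (i) exhibit an element of $\E_{pw}(\Omega)$ to ensure $\inf(P_1)<+\infty$; (ii) extract an accumulation point $v$ from a minimizing sequence $(v_n)_n$; and (iii) show that $v$ belongs to $\E_{pw}(\Omega)$ and that $\F$ is lower semicontinuous along such a sequence.

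For step (i), I would build a piecewise affine competitor by hand, exploiting hypothesis $(H)$. Every face of $\partial\Omega$ lies in a hyperplane with normal in $E=\{\pm 1/\sqrt N\}^N$, hence a level set of some affine map $x\mapsto \langle\sigma,x\rangle+c$ with $\sigma\in\{-1,+1\}^N$. One can therefore partition $\Omega$ into finitely many convex polyhedra on which a function is chosen to be affine with gradient in $\{-1,+1\}^N$, glued continuously and vanishing on $\partial\Omega$; the prototype, when $\Omega$ is a ``diagonal cube'', is $x\mapsto c-\|x-x_0\|_{l^1}$. Any such function has $\F<+\infty$.

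For step (ii), let $(v_n)_n\subset\E_{pw}(\Omega)$ be a minimizing sequence. Each $v_n$ is $\sqrt N$-Lipschitz and vanishes on $\partial\Omega$, so by Arzel\`a--Ascoli a subsequence converges uniformly on $\overline\Omega$ to some $v\in C_0(\Omega)$. Setting $w_n^i:=\partial_i v_n$, one has $\|w_n^i\|_{L^\infty(\Omega)}=1$ and, by \eqref{DVrepresentation},
\[
|Dw_n^i|(\Omega)\,=\,2\,\Haus^{N-1}(\Saut_i^{v_n})\,\le\, 2\,\F(v_n),
\]
so $(w_n^i)_n$ is bounded in $BV(\Omega)$. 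By $BV$ compactness, up to a further extraction, $w_n^i\to w^i$ in $L^1(\Omega)$ for some $w^i\in BV(\Omega)$. The uniform convergence $v_n\to v$ identifies $w^i$ with $\partial_i v$ as distributions, and taking a pointwise a.e. convergent subsequence of $\pm 1$-valued functions yields $|\partial_i v|=1$ a.e.; hence $v\in\mathfrak{S}(\Omega)$.

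For step (iii), I would apply Ambrosio's $SBV$ lower semicontinuity theorem; since $\nabla w_n^i\equiv 0$, this reduces to the classical lower semicontinuity of the perimeter functional applied to the level sets $\{w_n^i=1\}$. This gives, for each $i$,
\[
\Haus^{N-1}(\Saut_i^v)\,\le\,\liminf_{n\to\infty}\Haus^{N-1}(\Saut_i^{v_n}),
\]
whence $\F(v)\le\liminf_n\F(v_n)$. In particular $\Haus^{N-1}(\Saut_i^v)<+\infty$, so the set $\{\partial_i v=1\}$ has finite perimeter in $\Omega$, the function $\partial_i v$ is piecewise constant, $v\in\E_{pw}(\Omega)$, and $v$ is the desired minimizer.

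The step requiring the most care is (i): turning the polyhedral hypothesis $(H)$ into an explicit continuous piecewise-affine function on $\Omega$ with gradient everywhere in $\{-1,+1\}^N$, zero trace on $\partial\Omega$, and only finitely many gradient-jump surfaces. The compactness and lower semicontinuity parts then follow from standard $BV/SBV$ machinery.
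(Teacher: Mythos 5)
Your overall strategy coincides with the paper's: the direct method, with compactness coming from the uniform Lipschitz bound on the minimizing sequence plus $BV$ compactness for the partial derivatives, and lower semicontinuity of the jump measure. Steps (ii) and (iii) are correct, and your variations there are legitimate and even slightly streamline the paper's argument: you deduce that $\partial v/\partial x_i$ is piecewise constant from the fact that a $\pm1$-valued $BV$ function is $2\chi_E-1$ for a set $E$ of finite perimeter, whereas the paper invokes a separate compactness theorem for piecewise constant functions; and you reduce the semicontinuity to that of the perimeter of the level sets $\{w_n^i=1\}$, where the paper applies a weighted $BV$ semicontinuity theorem with weight $f\equiv 1$.

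The gap is in step (i), which you flag as delicate but do not close. The assertion that one ``can partition $\Omega$ into finitely many convex polyhedra on which a function is chosen to be affine with gradient in $\{-1,+1\}^N$, glued continuously and vanishing on $\partial\Omega$'' is exactly the statement that needs proof; it is not a routine consequence of $(H)$, and the one-cube prototype does not settle the general polyhedral case. The paper discharges this in Lemma \ref{distanceH} by showing that the single explicit function $d_{l^1}(\cdot\,,\dom)=\min_j d_{l^1}(\cdot\,,F_j)$ already belongs to $\E_{pw}(\Omega)$ with $\F<+\infty$: one must verify that at a.e.\ point the minimum is attained at a unique face, near which the function agrees with the affine distance to the corresponding hyperplane and hence solves all $N$ equations, the exceptional set where two faces compete being contained in finitely many hyperplanes $A_{j,k}$, $B_{j,k}$ (so it is Lebesgue-negligible), and that the jump sets $\Saut_i$ lie in the union of these finitely many hyperplanes intersected with the bounded set $\Omega$, hence have finite $\Haus^{N-1}$ measure. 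This verification genuinely uses $(H)$ and cannot be waved away: Remark \ref{gradient_distance} shows that for $\Omega=(0,1)^2$, whose normals are not in $E$, the function $d_{l^1}(\cdot\,,\dom)$ fails to solve the system, and in that case $\F\equiv+\infty$ on all of $\mathfrak{S}(\Omega)$. Your step (i) should be completed by an argument of this type.
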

\begin{rem}
Under assumption $(H)$, problem $(P_1)$ always has at least two optimal solutions:
indeed, if $u$ is a solution then $-u$ is also a solution.
\end{rem}
\begin{rem}
When $\om$ fails to satisfy hypothesis $(H)$, then problem $(P_1)$
is not well posed as we will show in section \ref{sec-gen-case}.
\end{rem}
In the proof of Theorem \ref{existence_particular_case} we will use the following lemmas.
\begin{lem}\label{Ebound}
Let $\Omega$ be an open bounded connected subset of $\R^N$ with Lipschitz boundary. Then
$$
- \dl(\cdot\, , \dom) \,\, \leq \,\, v \,\, \leq \,\, \dl(\cdot\, , \dom)
\quad on\,\, \overline{\Omega}
$$
for every function $v \in \mathfrak{S}(\Omega)$.
\end{lem}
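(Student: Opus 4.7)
The plan is to reduce the inequality to a purely local Lipschitz estimate around each point of $\overline{\Omega}$, taking advantage of the fact that any function in $\mathfrak{S}(\om)$ is $1$-Lipschitz for the $l^1$ norm on convex subsets of $\overline{\Omega}$.

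Fix $x_0 \in \overline{\Omega}$ and set $r := \dl(x_0,\dom)$. If $x_0 \in \dom$ then $r=0$ and $v(x_0)=0$ by the boundary condition, so the inequality holds. Assume now $x_0 \in \Omega$, so that $r>0$. The first step is to show that the closed $l^1$-ball
\[
\overline{B}(x_0,r) := \{x \in \R^N : \|x-x_0\|_{l^1} \leq r\}
\]
is contained in $\overline{\Omega}$. By definition of $r$ the open $l^1$-ball $B(x_0,r)$ does not meet $\dom$; since it is convex (hence connected) and contains $x_0 \in \Omega$, it must lie entirely in $\Omega$. Taking closures gives $\overline{B}(x_0,r) \subset \overline{\Omega}$. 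By compactness of $\dom$, there exists $y \in \dom$ with $\|y-x_0\|_{l^1}=r$, and this $y$ lies in $\overline{B}(x_0,r) \cap \dom$.

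Next, since $v \in W^{1,\infty}_0(\om)$ satisfies $|\partial v/\partial x_i|=1$ a.e.\ for each $i$, we have $\|\nabla v\|_{L^\infty(\Omega;\R^N)} \leq 1$ in the $l^\infty$ norm. On the convex subset $\overline{B}(x_0,r)$ of $\overline{\Omega}$, the function $v$ is therefore Lipschitz for the $l^1$ norm with constant $1$: indeed, for any two points $x,z \in \overline{B}(x_0,r)$, the segment $[x,z]$ lies in $\overline{B}(x_0,r) \subset \overline{\Omega}$ and
\[
|v(x)-v(z)| \,\leq\, \int_0^1 |\nabla v(x+t(z-x))\cdot(z-x)|\,dt \,\leq\, \|\nabla v\|_{L^\infty}\,\|x-z\|_{l^1}\,\leq\,\|x-z\|_{l^1}.
\]
Applying this to $x=x_0$ and to $z=y$, and recalling that $v(y)=0$, we obtain
\[
|v(x_0)| \,=\, |v(x_0)-v(y)| \,\leq\, \|x_0-y\|_{l^1} \,=\, r \,=\, \dl(x_0,\dom),
\]
which is the desired bound.

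The only nontrivial point in this plan is checking that $\overline{B}(x_0,r) \subset \overline{\Omega}$ so that the segment integral above is meaningful; the remainder is just the standard fact that a Sobolev function with gradient bounded by $1$ in $l^\infty$ is $1$-Lipschitz for the $l^1$ norm along straight segments that stay within the domain. No use of the assumption $(H)$ or of any $SBV$ structure is needed — only the Lipschitz regularity of $\dom$ (which ensures the continuous vanishing of $v$ on $\dom$) and the pointwise a.e.\ bound on the partial derivatives.
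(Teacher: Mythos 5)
Your proof is correct, and it rests on the same two pillars as the paper's own argument: the choice of a closest boundary point $y$ in the $l^1$ distance, and the duality estimate $|\langle \nabla v, w\rangle| \le \max_i|\partial v/\partial x_i|\cdot\|w\|_{l^1} \le \|w\|_{l^1}$. The difference is in how the a.e.\ bound on $\nabla v$ is converted into a pointwise estimate between $x_0$ and $y$: the paper integrates along the segment from $x^t:=tx+(1-t)y$ to $x$ and, to cope with the null set where $v$ fails to be differentiable, perturbs this segment into a nearby Lipschitz path avoiding that set (Lemma 3.2 of the cited reference), then lets $t,\eps\to 0$; you instead observe that the open $l^1$-ball $B(x_0,r)$ is a convex subset of $\Omega$ and invoke the standard fact that a $W^{1,\infty}$ function on a convex open set is Lipschitz with constant governed by $\|\nabla v\|_{L^\infty}$ in the appropriate dual norms, extending to the closed ball by continuity. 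Your route is more self-contained, replacing the external path-perturbation lemma by a classical Sobolev fact, and your preliminary verification that $B(x_0,r)\subset\Omega$ makes explicit something the paper only uses implicitly (namely that the segment $[x^t,x]$ lies in $\Omega$). One small caveat: the displayed inequality $|v(x)-v(z)|\le\int_0^1|\nabla v(x+t(z-x))\cdot(z-x)|\,dt$ is not literally valid for \emph{every} pair $x,z$ when $\nabla v$ is only defined a.e.\ --- it holds along a.e.\ parallel segment (Fubini) or for mollifications $v*\rho_\eps$, after which one passes to all pairs by local uniform convergence. This is precisely the technical point the paper's path-perturbation is designed to handle; since the convex-domain Lipschitz estimate is standard, citing it (or inserting the one-line mollification argument) fully closes this gap.
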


\begin{proof}
Let $v\in \mathfrak{S}(\Omega).$
We denote by $A$ a set of zero Lebesgue measure such that
$v$ is differentiable on $\om \setminus A$ with
$\left|\frac{\partial v}{\partial x_i}\right|=1$ for all $i$.

Take $x \in \om$, and let $y \in \partial \om$ be such that
$d_{l^1}(x , \partial \om)=\norm{x-y}_{l^1}$.
For any $t \in \,]0,1]$ we define the map $\gamma_t: [0,1] \to \om$
by $\gamma_t(s):= s x + (1-s) x^t$, where $x^t := t x + (1-t) y$.
For a fixed $t \in \,]0,1[\,$ and $\eps >0$, we apply Lemma 3.2 in \cite{Briani} to get
the existence of a Lipschitz continuous path $\gamma_{t,\eps}: [0,1] \to \om \setminus A$
such that
$$
\gamma_{t,\eps}(0)=x^t, \,\, \gamma_{t,\eps}(1)=x \,\,\, and \,\,\,
\| \gamma_{t,\eps}' - \gamma_t' \|_{L^\infty(0,1)} \leq \eps.
$$
Then the function $v \circ \gamma_{t,\eps}$ is Lipschitz continuous on $[0,1]$
and we can compute
$$
v(x) - v(x^t)  =  \int_0^1 (v \circ \gamma_{t,\eps})'(s) ds
=\int_0^1 \nabla v (\gamma_{t,\eps}(s)).\gamma_{t,\eps}'(s) ds\,.
$$
Now $\left|\frac{\partial v}{\partial x_i}(\gamma_{t,s}(s))\right|=1$ for all $i$ and so
$$
v(x)-v(x^t)
\leq \int_0^1 \left\| \gamma_{t,\eps}'(s) \right\|_{l^1} ds
\leq  N \eps + \int_0^1 \left\| \gamma_t'(s) \right\|_{l^1} ds
\,\,\, = \,\,\, N\eps + \left\| x-x^t \right\|_{l^1}.
$$
Letting $t$ and $\eps$ go to $0$ this yields $v(x) \leq d_{l^1}(x , \partial \om)$.
In an analogous way one can prove that $-v(x) \leq d_{l^1}(x , \partial \om)$
\end{proof}

\begin{lem}\label{distanceH}
Let $\Omega$ be an open bounded connected subset of $\R^N$ which satisfies hypothesis $(H)$. Then $\dl(\cdot\,,\dom))$
belongs to $\E_{pw}(\Omega)$ and $\F(\dl(\cdot\,,\dom)) < +\infty$.
\end{lem}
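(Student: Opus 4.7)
My plan is to show that $u(x) := \dl(x, \dom)$ is piecewise affine on $\om$, with each affine piece having gradient in $\{-1, +1\}^N$ and the decomposition of $\om$ consisting of only finitely many polyhedral pieces. Granting this, the conclusion is immediate: $u$ is $1$-Lipschitz in the $l^1$-norm and vanishes on $\dom$, hence $u \in W^{1,\infty}_0(\om)$; each partial derivative $\partial u/\partial x_i$ is piecewise constant with values in $\{-1, +1\}$, so $u \in C^1_{pw}(\om) \cap \mathfrak{S}(\om) = \E_{pw}(\om)$; and the jump sets $\Saut_i^u$ lie in the codimension-one skeleton of the decomposition, a finite union of bounded $(N-1)$-polyhedra, giving $\Haus^{N-1}(\Saut_i^u) < +\infty$ and hence $\F(u) < +\infty$.

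For the piecewise affine structure, I would proceed as follows. Enumerate the finitely many faces $F_1, \ldots, F_k$ of $\dom$ given by $(H)$; each $F_j$ lies in a hyperplane $H_j$ with outward unit normal $\nu_j = \sigma^j/\sqrt{N}$ for some $\sigma^j \in \{-1, +1\}^N$. Associate to $F_j$ the affine function $\ell_j(x) := -\langle \sigma^j, x - y_j\rangle$ for any $y_j \in F_j$, so that $\nabla \ell_j = -\sigma^j \in \{-1, +1\}^N$, $\ell_j$ vanishes on $H_j$, and $\ell_j(x) = \dl(x, H_j)$ whenever $\ell_j(x) \geq 0$. The key local observation is: if $x \in \om$ has a nearest boundary point $y^* \in \dom$ lying in the relative interior of a face $F_{j^*}$, then the $l^1$-projection onto $H_{j^*}$ stays in $F_{j^*}$ throughout a neighborhood of $x$, so $u$ coincides with $\ell_{j^*}$ there.

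Next, I would upgrade this to a global finite decomposition. Define cells $V_j := \{x \in \om : u(x) = \ell_j(x)\}$; these cover $\om$, and each is cut out by the finitely many affine inequalities $\ell_j \leq \ell_{j'}$ (for $j' \neq j$) together with any auxiliary inequalities ensuring that $F_j$ is the effectively reachable nearest face. In the convex case this is direct: one has $u = \min_j \ell_j$ globally, and the $V_j$ are the standard $l^1$-Voronoi cells of the faces. In the non-convex case it is more delicate, since reflex vertices of $\dom$ contribute additional internal bisecting hyperplanes along which the nearest-face index $j$ switches, and the cells $V_j$ must be further subdivided along these bisectors. On each resulting cell, $u$ still coincides with some $\ell_j$, so $\nabla u = -\sigma^j \in \{-1, +1\}^N$ automatically; the remaining work is to verify that the decomposition is genuinely polyhedral and contains only finitely many cells, which is, in my view, the main technical obstacle of the proof.
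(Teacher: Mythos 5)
Your outline correctly identifies the local picture (near a point whose unique nearest face is $F_{j^*}$, with the distance attained in the relative interior of $F_{j^*}$, the function $\dl(\cdot\,,\dom)$ agrees with the affine function $\ell_{j^*}$ whose gradient is $-\sigma^{j^*}\in\{-1,+1\}^N$), and this is also the starting point of the paper's proof. But the proof is not complete: you reduce everything to the claim that the cells $V_j$ form a \emph{finite polyhedral} decomposition of $\om$, you acknowledge that in the non-convex case this requires subdividing along bisectors emanating from reflex vertices, and you explicitly leave this step --- which you yourself call ``the main technical obstacle'' --- unproved. As it stands, the argument establishes neither that $\left|\partial u/\partial x_i\right|=1$ a.e.\ (you would need to know the union of the cell interiors has full measure) nor that $\Haus^{N-1}(\Saut_i^u)<+\infty$ (you would need to control the skeleton of the decomposition). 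A secondary imprecision: your ``key local observation'' requires not only that some nearest point lies in the relative interior of a face, but that the minimizing face is \emph{unique}; otherwise $u$ need not coincide with a single $\ell_{j^*}$ on a full neighborhood.

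The missing idea, which is how the paper closes the argument, is that the full cell decomposition is never needed. Writing $\dl(\cdot\,,\dom)=\min_j \dl(\cdot\,,F_j)$, one only has to show that the \emph{exceptional set} --- the points $x\in\om$ at which the minimum is not attained by a unique face in its relative interior --- is contained in a finite union of hyperplanes: the set where the distance is attained on $F_j\cap F_k$ lies in a hyperplane $A_{j,k}$ generated by $H_j\cap H_k$ and $\xi_j+\xi_k$, and the set where two parallel faces are equidistant is a hyperplane $B_{j,k}$. This containment does both jobs at once: the exceptional set is $\Leb^N$-negligible, so a.e.\ point is a ``good'' point and the eikonal system holds a.e.; and each jump set $\Saut_i^u$ is contained in $\bigcup_{j,k}\bigl[\om\cap(A_{j,k}\cup B_{j,k})\bigr]$, a finite union of bounded pieces of hyperplanes, whence $\F(\dl(\cdot\,,\dom))<+\infty$. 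Your Voronoi-type decomposition can certainly be pushed through (a minimum of finitely many polyhedral convex functions is piecewise affine with finitely many polyhedral pieces), but you would still have to rule out cells on which the nearest boundary point lies on the relative boundary of a face rather than its relative interior; bounding the bad set by hyperplanes, as the paper does, is the shorter and safer route.
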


\begin{rem}\label{gradient_distance} Notice that for a general bounded open subset $\om$, $d_{l^1}(\,\cdot\, , \partial \om)$
does not necessarily satisfy
$\left|\frac{\partial v}{\partial x_i}\right|=1$ a.e. in $\Omega$ for all $i$.
Indeed, it is sufficient to consider $\Omega=(0,1)^2\subset \R^2:$ we have that
$d_{l^1}((x_1,x_2) , \partial \om) = x_2$
in the set $T=\{(x_1,x_2):0\leq x_1 \leq 1/2,\,0\leq x_2 \leq x_1\}\,.$
\end{rem}
\begin{proof}
Since $\om$ satisfies hypothesis $(H)$, we can write
$\displaystyle \dom \,\, = \,\, \bigcup_{j=1}^{m} F_j$ where each face $F_j$
is a polyhedron of dimension $N-1$ included in a hyperplane $H_j$ with normal in $E$.
We shall assume that two faces $F_j$ and $F_k$ that intersect are not included
in the same hyperplane of $\R^N$.

Let $j \in \{1,\ldots,m\}$. We first notice that since the normal of the hyperplane
$H_j$ is included in $E$, one has
$\left|\frac{\partial }{\partial x_i} d_{l^1}(\,\cdot\,,H_j) \right|=1$ a.e. in $\Omega$
for all $i$. Let us prove that
$\displaystyle \dl(\,\cdot\,,\dom) = \min_{1 \leq j \leq m} \dl(\cdot\,,F_j)$ also satisfies
this system of eikonal equations in $\Omega$.
We fix $x$ in $\om$ and distinguish three cases:

{\em Case 1.\,} Assume that $\displaystyle \dl(x,\dom) = \dl(x,F_j)$ for a unique
index $j$. Then $\dl(x,F_j)$ is attained in the relative interior of $F_j$,
so that in a neighborhood of $x$ one has $\dl(\cdot\,,\dom)=d_{l^1}(\,\cdot\,,H_j)$,
and thus in this neighborhood $\dl(\cdot\,,\dom)$ is regular and satisfies the system of eikonal
equations (\ref{pbdipart}).

{\em Case 2.\,} Assume that $\displaystyle \dl(x,\dom) = \dl(x,F_j) = \dl(x,F_k)$
is attained in $F_j \cap F_k$ with $j \neq k$.
Then $\dl(x,F_j) = \dl(x,F_k)$ is attained in the intersection $H_j \cap H_k$.
We notice that the set of the elements $y$ of $\R^N$ such that $\dl(x,F_j) = \dl(x,F_k)$
is attained in $F_j \cap F_k$ is included in the hyperplane $A_{j,k}$
generated by $H_j \cap H_k$ and $\xi_j+\xi_k$, where $\xi_l$ denotes a normal vector
to $H_l$ for $l \in \{j,k\}$.
Since $\mathcal{L}^N(A_{j,k})=0$
this set is negligeble in view of the system of eikonal equations (\ref{pbdipart}).

{\em Case 3.\,} Assume that $\displaystyle \dl(x,\dom) = \dl(x,F_j) = \dl(x,F_k)$
for $j \neq k$ such that the hyperplanes $H_j$ and $H_k$ are parallel
and that case 2 is excluded. Then $\dl(x,F_l)$ is attained in the relative interior of $F_l$
for $l \in \{j,k\}$, and thus one has $\dl(x,H_j) = \dl(x,H_k)$. As a consequence,
$x$ belongs to the set $B_{j,k}:=\{y : \dl(y,H_j) = \dl(y,H_k)\}$, which is an hyperplane
parallel to $H_j$ and $H_k$, so that $\mathcal{L}^N(B_{j,k})=0$. This means that
this set is negligeble in view of the system of eikonal equations (\ref{pbdipart}).

As a consequence of the above discussion, almost every $x$ in $\Omega$ enters
case $1$, so that $d_{l^1}(\,\cdot\,,\dom)$ is a solution of (\ref{pbdipart}).
Moreover, if $i \in \{1,\ldots,N\}$ then one has
\begin{eqnarray*}
J^{d_{l^1}(\,\cdot\,,\dom)}_i & = &
J^{d_{l^1}(\,\cdot\,,\dom)}_i \cap \bigcup_{j,k} \left(A_{j,k} \cup B_{j,k} \right)
 \\
& \subset &
\bigcup_{j,k} \left[\om \cap \left(A_{j,k} \cup B_{j,k} \right)\right].
\end{eqnarray*}
Since $\om$ is bounded and the sets $A_{j,k}$ and $B_{j,k}$ are hyperplanes of $\R^N$,
we infer that the last union has finite $\Haus^{N-1}$ measure.
This being true for all index $i$, we get that $\F(d_{l^1}(\,\cdot\,,\dom))$ is finite.
\end{proof}

We can now turn to the proof of Theorem \ref{existence_particular_case}.
\begin{proof}[Proof of Theorem \ref{existence_particular_case}]
As a direct consequence of Lemma \ref{distanceH}, we get that the infimum
$\inf(P_1)$ is finite.
We now apply the direct methods of the Calculus of Variations
to prove the existence of an optimal solution of $(P_1)$.  

Let $(v^n)_{n \in \N} \subset \mathcal{E}_{pw}(\Omega)$ be a minimizing sequence for $(P_1)$.
We deduce from Lemma \ref{Ebound} that $(v^n)_n$ is uniformly bounded in $L^{\infty}(\om)$.
Moreover this sequence is uniformly Lipschitz continuous on $\barom$ since
$\dom$ is Lipschitz and $| \nabla v^n | \leq \sqrt{N}$ a.e. in $\om$ for any $n$.
Applying Ascoli-Arzel\`a and Banach-Alaoglu Theorems,
and up to a subsequence,
we can assume that $v^n \to v^\infty$ in ${C}_0(\Omega)$
and $v^n \to v^\infty$ weakly* in $W^{1,\infty}(\Omega)$ for some
$v^\infty\in W_0^{1,\infty}(\Omega)$.

\noindent
{\it Step 1}:
we show that $v^{\infty} \in \mathcal{E}_{pw}(\Omega)$.
We first prove that $\left|\frac{\partial v^\infty}{\partial x_i}\right|=1$ a.e. in $\om$.
Let us fix $i \in \{1,\ldots,N\}$.
We observe that since $(v^n)_n$ is a minimizing sequence, there exists a constant $C>0$ such that
$\F(v^n)\leq C$ for every $n \in \N$.
This implies that
\begin{equation}\label{vnbound}
\norm{\frac{\partial v^n}{\partial x_i}}_{BV(\om)} \,\,\, = \,\,\,
\Leb^N(\om) +
2\mathcal{H}^{N-1}(\Saut_i^{v^n}) \,\,\, \leq \,\,\,\Leb^N(\om) + 2 C .
\end{equation}
Applying Theorem \ref{compactnessBV} to the sequence
$\frac{\partial v^n}{\partial x_i}$ we have that
 $\frac{\partial v^n}{\partial x_i} \to g_i$ weakly*
in $BV(\om)$ for some
function $g_i \in BV(\om)$ (up to a subsequence).
Since we already know that
$\frac{\partial v^n}{\partial x_i} \to \frac{\partial v^\infty}{\partial x_i}$
weakly* in $L^\infty(\om)$,
we infer that $\frac{\partial v^\infty}{\partial x_i}=g_i$.
Moreover $\frac{\partial v^n}{\partial x_i} \to \frac{\partial v^\infty}{\partial x_i}$
in $L^1(\om)$
so that $|\frac{\partial v^\infty}{\partial x_i}|=1$ a.e. in $ \om$.

To prove that $v^\infty$ belongs to $C^1_{pw}(\Omega)$
it is sufficient to apply Theorem \ref{piecewise_constant_compactness}
to the sequence $\frac{\partial v^n}{\partial x_i}$. Indeed,
$\frac{\partial v^n}{\partial x_i}$ is a piecewise constant function for every $n$ and
$\norm{\frac{\partial v^n}{\partial x_i}}_{L^{\infty}(\Omega)}+
\mathcal{H}^{N-1}(J_i^{v^n})$ is uniformly bounded since $\F(v^n)\leq C$.
As a consequence, up to a subsequence, $\frac{\partial v^n}{\partial x_i}$ converges in measure to a piecewise
constant function which is necessarily $\frac{\partial v^{\infty}}{\partial x_i}$.
This proves that $v^{\infty} \in \mathcal{E}_{pw}(\Omega)$.

\noindent{\it Step 2:}
we show that $v^\infty$ is a minimizer of $\F$. Since $(v_n)_n$ is a minimizing sequence
for $\F$ it is sufficient
to prove that
\begin{equation}\label{semicontinuity_particular_case}
\liminf_{n \to \infty} \, \mathcal{H}^{N-1}(\Saut_i^{v^n})
\,\, \geq \,\, \mathcal{H}^{N-1}(\Saut_i^{v^\infty})
\end{equation}
for all $i=1,\ldots,N$.
Thanks to (\ref{DVrepresentation}) one has
for any $n \in \N\cup \{+\infty\}$
$$
\displaystyle \left| D \frac{\partial v^n}{\partial x_i}\right|(\om) = 2 \mathcal{H}^{N-1}(\Saut_i^{v^n})\,.
$$
Moreover we know that $\frac{\partial v^n}{\partial x_i}\to
\frac{\partial v^{\infty}}{\partial x_i}$ weakly* in $BV(\Omega)$. Applying Theorem \ref{semicontinuityBV} with $f\equiv 1$ we get
(\ref{semicontinuity_particular_case}).
\end{proof}
We now study the relationship between
the viscosity solutions of problem (\ref{pbdipart}) and the optimal solutions of problem $(P_1)$.
We restrict ourselves to the one and two-dimensional cases, where the geometric arguments are
intuitive.

In the one-dimensional case, it is evident that the viscosity solution  
$u(x)=1-|x|$ and its opposite minimize the number of jumps of the solutions of (\ref{pbdim1}), as their derivatives have just one jump.
For the 2-dimensional case, we recall the following result about the existence of a viscosity solution
(see Example 1.3 and Theorem 3.3 of \cite{Giovanni}) .
\begin{prop}
Let $F:\R^2\to \R$ be a continuous function
such that $F(\xi_1,\xi_2)=0$ if and only if $|\xi_i|=1, i=1,2$ and $F(\xi_1,\xi_2)<0$ if
$|\xi_i|<1$ for $i=1,2$.  Let $\Omega$ be an open bounded connected subset of $\R^2$.
Then there is no $W^{1,\infty}(\Omega)$
viscosity solution of
problem
\[
\left\{
\begin{array}{ll}
F(Dv)=0\,\,
&\mbox{a.e. in} \,\Omega
\vspace{0.1cm}\\
v=0 & \mbox{on} \,\,\, \partial \Omega,
\end{array}
\right.
\]
unless $\Omega$ is a rectangle whose normals are in the set
$E=\{x=(x_1,x_2) \in \R^2: |x_i|=\frac{1}{\sqrt{2}}\,,i=1,2\}\,.$
In this case the only viscosity solution is
$\dl(\cdot\,,\dom)$.
\end{prop}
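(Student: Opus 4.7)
The plan is to reduce the question to a careful analysis of the $l^1$-distance function $\dl(\cdot\,,\dom)$, using the viscosity sub/supersolution conditions to squeeze any potential viscosity solution into a canonical form, and then to check for which $\Omega$ that canonical form is actually admissible. Since the statement is essentially imported from \cite{Giovanni} (Example~1.3 and Theorem~3.3), I would present it as a streamlined reconstruction of that argument tailored to the geometry at hand.

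Step~1 (upper/lower bound via subsolution). Let $u$ be a $W^{1,\infty}(\Omega)$ viscosity solution. From $F<0$ on the open square $(-1,1)^2$ and continuity it follows that the viscosity subsolution condition at any differentiability point $x_0$ yields $\nabla u(x_0)\in\{F\le 0\}$ and hence $\nabla u(x_0)\in[-1,1]^2$. So $u$ is $1$-Lipschitz for the $l^{1}$-norm, and together with $u=0$ on $\dom$ this gives $u\le \dl(\cdot\,,\dom)$ on $\overline{\Omega}$. Applying the same argument to $-u$ (which is also a viscosity subsolution, since $F$ is even in each coordinate on its zero and negativity sets) yields $u\ge -\dl(\cdot\,,\dom)$, so in fact we already have $|u|\le \dl(\cdot\,,\dom)$, recovering Lemma~\ref{Ebound} from the viscosity side.

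Step~2 (identification of $u$). I would then invoke the classical representation of viscosity solutions of first order Hamilton--Jacobi equations as the maximal subsolution: because the zero set of $F$ is exactly the vertices $\{\pm 1\}^{2}$ of the admissible square, the supersolution condition forces the gradient of $u$, at every point where it is meaningful, to lie in that vertex set. The only Lipschitz function vanishing on $\dom$ with this property is $\dl(\cdot\,,\dom)$ (or its opposite): one checks this by testing from below along segments toward the nearest boundary point, exactly in the spirit of the path argument in the proof of Lemma~\ref{Ebound}.

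Step~3 (geometric characterization). It remains to determine when $\dl(\cdot\,,\dom)$ actually satisfies $|\partial u/\partial x_i|=1$ a.e.\ and is a genuine viscosity solution. The a.e.\ eikonal condition forces, via the computation already made in Remark~\ref{gradient_distance}, each face of $\dom$ to have normal in $E$; this is the content of Lemma~\ref{distanceH} read in reverse. But any interior ``ridge'' of $\dl(\cdot\,,\dom)$ (i.e.\ a set where it realizes equal $l^{1}$-distances to two non-adjacent faces) will produce a smooth lower test function with $|D\phi|_{l^{\infty}}<1$ at the ridge; at such a point $F(D\phi)<0$, contradicting the supersolution property. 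A case analysis on the faces shows that these interior ridges are absent precisely when $\Omega$ is a rectangle with normals in $E$. In that case one verifies directly, using the explicit form of $\dl(\cdot\,,\dom)$ on such a rectangle, that the supersolution test is satisfied everywhere, so $\dl(\cdot\,,\dom)$ is the unique viscosity solution.

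The main obstacle is Step~3: the subsolution argument of Step~1 is essentially one line, and the identification in Step~2 is standard, but checking that \emph{any} interior ridge of $\dl(\cdot\,,\dom)$ obstructs the supersolution property—and that only the rectangles with $E$-normals avoid such ridges—requires a somewhat delicate case discussion on how the $l^{1}$-nearest face to a point changes across $\Omega$. This is exactly the geometric lemma underlying Theorem~3.3 of \cite{Giovanni}, which I would cite rather than redo in full detail.
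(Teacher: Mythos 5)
First, note that the paper does not prove this proposition at all: it is explicitly \emph{recalled} from Example~1.3 and Theorem~3.3 of \cite{Giovanni}, so there is no internal proof to compare against, and your decision to defer the hard geometric lemma to that reference is in itself consistent with what the paper does. The problem is that the surrounding reconstruction contains claims that are false, not merely unproved. In Step~1, under the stated hypotheses the set $\{F\le 0\}$ is all of $\R^2$, not $[-1,1]^2$: the zero set of $F$ is the four vertices $\{\pm1\}^2$, the complement of four points in $\R^2$ is connected, and $F$ is continuous, nonvanishing and negative somewhere there, hence $F<0$ off the vertices. So the subsolution condition is vacuous and yields no gradient bound; the estimate $|u|\le \dl(\cdot\,,\dom)$ comes instead from the a.e.\ equation $F(Du)=0\Leftrightarrow Du\in\{\pm1\}^2$, which places $u$ in $\mathfrak{S}(\Omega)$ and lets you invoke Lemma~\ref{Ebound}. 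This is repairable, but the mechanism you describe is not the right one.

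The genuine gap is Step~2. The assertion that the only Lipschitz function vanishing on $\dom$ with gradient a.e.\ in the vertex set is $\pm\dl(\cdot\,,\dom)$ is exactly the negation of the premise of the whole paper: that set is $\mathfrak{S}(\Omega)$, which is infinite. The identification $u=\dl(\cdot\,,\dom)$ must come from the \emph{pointwise} supersolution condition at non-differentiability points: the subdifferential $D^-u(x_0)$, being convex, must lie in $\{F\ge 0\}=\{\pm1\}^2$, which contains no nontrivial convex set, so $u$ admits no convex kinks; along each coordinate line the derivative $\pm1$ can then only jump downward, and a one-dimensional argument pins $u$ down. This also shows why your parenthetical ``or its opposite'' contradicts the statement (which asserts $\dl(\cdot\,,\dom)$ is the \emph{only} viscosity solution): $-\dl(\cdot\,,\dom)$ has an interior convex ridge where the test gradient $0\notin\{\pm1\}^2$. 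Relatedly, Step~3's claim that \emph{any} interior ridge produces a lower test function is wrong in one direction: the concave ridges of $\dl(\cdot\,,\dom)$ on a rectangle (its diagonals) admit no smooth function touching from below and are harmless — otherwise the proposition's positive assertion would fail. The correct dichotomy is that either some face of $\dom$ has normal off $E$ and $\dl(\cdot\,,\dom)$ already fails the a.e.\ system (Remark~\ref{gradient_distance}), or all normals lie in $E$ but $\Omega$ is not a rectangle and re-entrant features create \emph{convex} kinks (as in Example~\ref{example_Thierry}) where the supersolution test fails. As written, the sketch would not survive these counterexamples.
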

It also happens that $\dl(\cdot\,,\dom)$ and its opposite are the only optimal
solutions of $(P_1)$ if $\Omega$ is a rectangle with normals in
$E$, as  the following result shows. Notice that if $\Omega \subset \R^2$
is convex and satisfies
property $(H)$, then $\Omega$ is necessarily such a rectangle.
\begin{prop}\label{viscosity_distance}
Let $\Omega$ be a rectangle with normals in $E$.
Then $\pm \dl(\cdot\,,\dom)$ are the only optimal solutions of $(P_1)$.
\end{prop}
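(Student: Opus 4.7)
The plan is to rotate coordinates via $u = x_1 + x_2$ and $w = x_1 - x_2$, so that $\om$ becomes the axis-aligned rectangle $(-a,a) \times (-b,b)$ in the $(u,w)$-plane; we may assume $a \geq b$. In these variables the eikonal conditions $|\partial v/\partial x_i|=1$ translate to $(\partial v/\partial u,\partial v/\partial w) \in \set{(\pm 1,0),(0,\pm 1)}$ a.e., so every $v \in \E_{pw}(\om)$ is piecewise linear on a Caccioppoli partition of $\om$ whose cells are of one of four types: $v = \pm u + c$ or $v = \pm w + c$. A short continuity computation across common interfaces shows that adjacent cells meet along line segments whose direction is either axis-aligned or of slope $\pm 1$ in the $(u,w)$-plane. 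Pulled back to $(x_1,x_2)$, $\Saut_1^v$ is then a finite union of vertical and $\pm 45^\circ$ segments, $\Saut_2^v$ is a finite union of horizontal and $\pm 45^\circ$ segments, and the $\pm 45^\circ$ parts are common to both.

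Next I would use the integral condition $\int \sigma_1 \, dx_1 = 0$ on each horizontal slice $\om \cap \set{x_2 = y_0}$, where $\sigma_1 := \partial v/\partial x_1 \in \set{\pm 1}$. This forces at least one sign change, and if there is exactly one then it sits at the midpoint $m(y_0)$ of the slice. Since $\dom$ has slope $\pm 1$, a direct computation gives that $y_0 \mapsto m(y_0)$ is piecewise linear, consisting of two vertical pieces at $x_1 = \pm(a-b)/2$ of total length $2b$ joined by a central diagonal of slope $+1$ and length $(a-b)\sqrt 2$; one checks that this midpoint curve coincides with $\Saut_1^{\dl(\cdot,\dom)}$.

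The core quantitative step will be the slice-wise lower bound
\[
V_1(y_0) + \sqrt 2\, P_1(y_0) \,\geq\, \begin{cases} 1 & \text{if } |y_0| > (a-b)/2,\\ \sqrt 2 & \text{if } |y_0| < (a-b)/2, \end{cases}
\]
valid for a.e.\ $y_0$, where $V_1(y_0)$ and $P_1(y_0)$ count the vertical and the $\pm 45^\circ$ jumps of $\sigma_1$ on the slice. The extreme range is trivial from $V_1 + \sqrt 2\, P_1 \geq V_1 + P_1 \geq 1$, and the main obstacle is the middle range, where the bound can only fail if $(V_1, P_1) = (1, 0)$. In that case the unique jump sits on some vertical component $\set{x_1 = c}$ of $\Saut_1^v$, and the midpoint condition forces $c = m(y_0) = y_0$; but since each vertical component of $\Saut_1^v$ has a fixed $x_1$-coordinate, $c$ takes at most countably many values as $y_0$ varies, and so this case occurs only on a negligible set. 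Combining with the coarea-type identity $\Haus^1(\Saut_1^v) = \int(V_1(y_0) + \sqrt 2\, P_1(y_0))\, dy_0$ (each vertical segment contributes its length and each $\pm 45^\circ$ segment of length $\ell$ contributes $\sqrt 2 \cdot \ell/\sqrt 2 = \ell$), integration yields $\Haus^1(\Saut_1^v) \geq 2b + (a-b)\sqrt 2 = \Haus^1(\Saut_1^{\dl(\cdot,\dom)})$. The symmetric argument on vertical slices gives the analogous bound for $\Saut_2^v$, and summing proves $\F(v) \geq \F(\dl(\cdot, \dom))$.

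Finally, equality throughout forces $\Saut_i^v = \Saut_i^{\dl(\cdot, \dom)}$ for $i = 1, 2$. These two jump sets determine the Caccioppoli partition of $\om$ underlying $v$, and the boundary condition $v = 0$ together with continuity of $v$ across each interface then determines the affine function on every cell up to the single global choice of the sign of $v$ just inside one face. We conclude that the only optimal solutions of $(P_1)$ are $\pm \dl(\cdot, \dom)$.
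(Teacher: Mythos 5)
Your plan is correct in substance, but it follows a genuinely different route from the paper's proof, and it is worth recording the contrast. The paper never analyses the structure of $\Saut_1^v$ at all: it uses the two-sided bound $-\dl(\cdot,\dom)\le v\le \dl(\cdot,\dom)$ of Lemma \ref{Ebound} to introduce the open set $A$ of heights $t$ at which $v$ dips strictly between $\pm \dl(\cdot,\dom)$ somewhere on the slice; on slices over $A$ the derivative $\partial v/\partial x_1$ must jump at least \emph{twice} (a one-jump profile with $|w'|=1$ and zero boundary values is the tent function, which would violate $-d<v<d$), so the unweighted Eilenberg inequality (Lemma \ref{federer}) gives at least $2\mathcal{L}^1(A)$ of jump length there, while the ridge of the distance function is a $1$-Lipschitz graph over $A$ contributing at most $\sqrt2\,\mathcal{L}^1(A)$; since $2>\sqrt2$ and $A$ is nonempty open, strict inequality — and hence uniqueness — falls out immediately, with no equality analysis. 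Your argument instead proves the sharp absolute lower bound $\Haus^1(\Saut_1^v)\ge 2b+\sqrt2(a-b)$ for \emph{every} $v\in\E_{pw}(\om)$ via the classification of interfaces (vertical versus $\pm45^\circ$ for $\Saut_1^v$), a weighted coarea identity, and the countability trick to dispose of the degenerate case $(V_1,P_1)=(1,0)$ in the central band; this buys you the explicit optimal value of $(P_1)$ and avoids Lemma \ref{Ebound} entirely, but it costs you a rigidity step at the end and some measure-theoretic care that the paper sidesteps. Two points you should tighten: first, ``finite union of segments'' should read ``countable union of subsets of lines, up to an $\Haus^1$-negligible set'' — the Caccioppoli partition underlying $v\in C^1_{pw}(\om)$ may be countably infinite and its cells need not be connected, so you must pass to indecomposable components before asserting that $v$ is affine on each cell and that $\Haus^1$-a.e.\ point of the jump set lies on an interface $\{\ell_j=\ell_k\}$ between two affine pieces; second, the closing rigidity paragraph is only a sketch — you should say explicitly that equality forces, for a.e.\ slice, a single jump located exactly on the ridge of $\dl(\cdot,\dom)$, hence $\Saut_i^v=\Saut_i^{\dl(\cdot,\dom)}$ up to $\Haus^1$-null sets, that $\nabla v$ is then constant on each of the four components of the complement of the (closed) ridge, and that the boundary condition plus continuity across the three ridge interfaces propagates a single sign $\eps=\pm1$ with $v=\eps\,\dl(\cdot,\dom)$. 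With those repairs the plan is a complete and valid alternative proof.
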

We will use the following lemma in the proof:
\begin{lemma}\label{federer}
If $P: \R^2\to \R$ is the projection $(x_1,x_2)\to x_2$,
and $E$ is a measurable set of $\R^2$ then
$$
\int_{\R}\mathcal{H}^0(E\cap P^{-1}\{y\})dy\leq \mathcal{H}^{1}(E).
$$
\end{lemma}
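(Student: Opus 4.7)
The plan is to prove this statement, which is a classical coarea-type (Eilenberg) inequality, by a direct covering argument exploiting the fact that $P$ is $1$-Lipschitz. The key idea is to work at the level of the $\delta$-premeasures and pass to the limit $\delta \to 0^+$ only at the end.

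First I would set up the covering. Recall that for $s \geq 0$ and $A \subset \R^n$,
$$\mathcal{H}^s_\delta(A) \,:=\, \inf\Bigl\{\sum_i (\mathrm{diam}\,U_i)^s \,:\, A \subset \bigcup_i U_i,\, \mathrm{diam}\, U_i < \delta\Bigr\},$$
so in particular $\mathcal{H}^0_\delta(A)$ is the minimum number of sets of diameter $<\delta$ needed to cover $A$, and $\mathcal{H}^0_\delta(A) \uparrow \mathcal{H}^0(A)$ (the counting measure) as $\delta \to 0^+$. Fix $\delta, \varepsilon > 0$ and pick a countable cover $\{C_i\}_i$ of $E$ by open balls of diameter $<\delta$ with $\sum_i \mathrm{diam}(C_i) \leq \mathcal{H}^1_\delta(E) + \varepsilon$.

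Next comes the crucial elementary observation: for each fixed $y \in \R$, the family $\{C_i \cap P^{-1}\{y\}\}_i$ covers $E \cap P^{-1}\{y\}$ by sets of diameter $< \delta$, so by definition of $\mathcal{H}^0_\delta$,
$$\mathcal{H}^0_\delta\bigl(E \cap P^{-1}\{y\}\bigr) \,\leq\, \#\{i : C_i \cap P^{-1}\{y\} \neq \emptyset\} \,=\, \sum_i \mathbf{1}_{P(C_i)}(y).$$
Because $P$ is $1$-Lipschitz, each $P(C_i)$ is an open interval with $\mathcal{L}^1(P(C_i)) \leq \mathrm{diam}(C_i)$, hence integrating against Lebesgue measure on $\R$ yields
$$\int_\R \mathcal{H}^0_\delta\bigl(E \cap P^{-1}\{y\}\bigr)\, dy \,\leq\, \sum_i \mathcal{L}^1(P(C_i)) \,\leq\, \mathcal{H}^1_\delta(E) + \varepsilon \,\leq\, \mathcal{H}^1(E) + \varepsilon.$$

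Finally, I would let $\varepsilon \to 0$ and then $\delta \to 0^+$: the monotone convergence theorem, applied to the nondecreasing family $y \mapsto \mathcal{H}^0_\delta(E \cap P^{-1}\{y\})$ whose pointwise limit is $\mathcal{H}^0(E \cap P^{-1}\{y\})$, then delivers the claimed inequality. The only genuine subtlety, which I view as the main obstacle to a fully rigorous write-up, is the measurability of the integrand $y \mapsto \mathcal{H}^0(E \cap P^{-1}\{y\})$; this is precisely why I would take the $C_i$ to be open balls, so that each finite partial sum $\sum_{i=1}^n \mathbf{1}_{P(C_i)}$ is Borel, and the pointwise bound together with the monotone limit remain Borel measurable. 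This suffices for the intended applications in the paper, where $E$ will be the jump set of a $BV$ function, hence a Borel set.
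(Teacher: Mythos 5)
Your argument is correct in substance, but it takes a genuinely different route from the paper: the authors dispose of this lemma in one line by invoking Theorem 2.10.25 of Federer (the Eilenberg inequality) with $f=P$, $k=0$, $m=1$, for which the dimensional constant $\alpha(0)\alpha(1)/\alpha(0+1)$ equals $1$. What you have written out is essentially the proof of that theorem in this special case, so your version is self-contained and more elementary at the cost of length, and it makes transparent why the constant is exactly $1$ here ($P$ is $1$-Lipschitz and $\mathcal{H}^1=\mathcal{L}^1$ on $\R$). Your treatment of measurability, bounding the fiber-counting function by Borel functions at the premeasure level and passing to the limit by monotone convergence (or Fatou), is the right way to make sense of the integral; strictly speaking the integral in the statement should be an upper integral for a general measurable $E$, as in Federer, but in the paper's applications $E$ is a jump set, hence rectifiable, and this is harmless.

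One small technical point to repair: you cannot take the covering sets $C_i$ to be open \emph{balls} and still guarantee $\sum_i\mathrm{diam}(C_i)\le\mathcal{H}^1_\delta(E)+\varepsilon$, because the infimum defining $\mathcal{H}^1_\delta$ runs over arbitrary sets, and a set of diameter $d$ in $\R^2$ need not be contained in a ball of diameter $d$ (an equilateral triangle of side $d$ requires a ball of diameter $2d/\sqrt{3}$); restricting to balls yields the spherical premeasure, which can be strictly larger. The fix costs nothing: take arbitrary sets $C_i$ with $\sum_i\mathrm{diam}(C_i)\le\mathcal{H}^1_\delta(E)+\varepsilon$ and replace each indicator $\mathbf{1}_{P(C_i)}$ by $\mathbf{1}_{I_i}$ with $I_i:=[\inf P(C_i),\sup P(C_i)]$, a closed interval of length $\mathrm{diam}(P(C_i))\le\mathrm{diam}(C_i)$. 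The sum $\sum_i\mathbf{1}_{I_i}$ is Borel, dominates $y\mapsto\mathcal{H}^0_\delta(E\cap P^{-1}\{y\})$ exactly as in your argument, and the rest of the proof goes through unchanged.
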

\begin{proof}
It is sufficient to apply Theorem 2.10.25 of \cite{Federer} with $f=P$, $X=\R^2$, $Y=\R$, $k=0$ and $m=1$.
\end{proof}
We are now able to  prove Proposition \ref{viscosity_distance}:
\begin{proof}
To simplify the notations, we set $d:=\dl(\cdot\,,\dom)$. Let $v
\in \mathcal{E}_{pw}(\Omega)$ different
from $d$ and $-d$: we want to prove that
$$
\F(v) \, = \, \sum_{i=1}^2 \Haus^1(\Saut_i^v) \, > \, \sum_{i=1}^2 \Haus^1(\Saut_i^d)
\, = \, \F(d) \, = \, \F(-d).
$$
We first claim that $\Haus^1(\Saut_1^v) > \Haus^1(\Saut_1^d)$.
To this end, we define
$$
A \, := \; \left\{ t \in \R \,: \, \mbox{$\exists\, s \in \R$ such that $x:=(s,t) \in \om$ and
$-d(x) < v(x) < d(x)$}\right\}.
$$
Since the functions $v$, $d$ and $-d$ are continuous, the set $A$ can be written
as $\displaystyle \bigcup_{i \in I} \,]t_i,t'_i[\,$
where $I \subset \N$ is non empty, and $t_i < t'_i$ for all $i \in I$.
We notice that
$$
\Haus^1(\Saut_1^v \cap (\R \times \overline{A}^c))
= \Haus^1(\Saut_1^d \cap (\R \times \overline{A}^c))
$$
where $\overline{A}^c = \R \setminus \overline{A}$. We also notice that
$\Haus^1(\Saut_i^d \cap (\R \times \partial A)) = 0$,
so that in fact
$$
\Haus^1(\Saut_1^v \cap (\R \times A^c))
\geq \Haus^1(\Saut_1^d \cap (\R \times A^c)).
$$
For $\mathcal{L}^1$-almost every $t \in A$,
function $w: s \mapsto v(s,t)$
is differentiable on $\,]a,b[\,$ with derivative $\pm 1$,
where $a$ and $b$ are such that $](a,t),(b,t)[ = \om \cap (\R \times \{t\})$.
Moreover, $-d((s,t)) < w(s) < d((s,t))$ for some $s \in \,]a,b[\,$, so that
$w'=\frac{\partial v}{\partial x_1}$ has at least two jumps in $ \,]a,b[\,$.
As a consequence, applying Lemma \ref{federer}, we get
\begin{eqnarray*}
\Haus^1(\Saut_1^v \cap (\R \times A)) & \geq &
\int_{t \in A} {\mathcal H}^0 (\Saut_1^v \cap (\R \times \{t\})) dt \\
& \geq & \int_{t \in A} 2 dt = 2\, \mathcal{L}^1(A).
\end{eqnarray*}
On the other hand, the geometry of $\Saut_1^d$ implies that
$$
\Haus^1(\Saut_1^d \cap (\R \times A)) \,\,\leq\,\, \sqrt{2} \,\mathcal{L}^1(A).
$$
As a consequence we get $\Haus^1(\Saut_1^v) > \Haus^1(\Saut_1^d)$. The
inequality $\Haus^1(\Saut_2^v) > \Haus^1(\Saut_2^d)$ follows in the same way.
\end{proof}
We observe that $\dl(\cdot\,,\dom)$ is not in general an optimal solution of
$(P_1)$, as the following two examples show.
\begin{example}\label{example_Thierry}
Consider the subsets
$\om_+ := \{x \in \R^2: \| x \|_{l^1} \leq 3 \}$ and
$\om_- := \{x \in \R^2: \| x - (2,2)\|_{l^1} \leq 1 \}$
and set $\om := int(\om_+ \cup \om_-)$.
Define $u$ as
$$
u(x)=\dl(x,\dom_+)\chi_{\Omega_+}(x)-\dl(x,\dom_-)\chi_{\Omega_-}(x)\,.
$$
Then $\F(u) = 16,$ while $\F(\dl(\cdot\,,\dom))=16+2\sqrt{2}$.
\end{example}

The fact that
the function $\dl(\cdot\,,\dom)$ is not a solution of problem $(P_1)$,
is not a question of sign, as one could think from the previous example (notice that $u$ is not positive on $\Omega$).
Indeed in the next example we will exhibit
a positive
function $v \in \mathcal{E}_{pw}(\Omega)$ such that $\F(\dl(\cdot\,,\dom))>\F(v).$

\begin{example}
Define the four following squares $C_1 := \{x \in \R^2: \| x \|_{l^1} \leq 12 \}$,
$C_2 := \{x \in \R^2: \| x -(10,10)\|_{l^1} \leq 8 \}$,
$C_3 := \{x \in \R^2: \| x -(2,11)\|_{l^1} \leq 1 \}$,
$C_4 := \{x \in \R^2: \| x -(11,2)\|_{l^1} \leq 1 \}$.
Let $\Omega:=int(C_1\cup C_2\cup C_3\cup C_4)$. It is easy to see that
$\F(\dl(\cdot\,,\dom)) =120+16\sqrt{2}$.
Now, let us define
$$
u(x)=d_{l^1}(x,\partial {(C_1\cup C_2)})\chi_{C_1\cup C_2}(x)+
d_{l^1}(x,\partial {C_3})\chi_{C_3}(x)+ d_{l^1}(x,\partial {C_4})\chi_{C_4}(x).
$$
Then $u$ is non-negative, $u$ is a solution of (\ref{pbdipart}) and
$\F(u)=88+24\sqrt{2}$,
and thus $\dl(\cdot\,,\dom)$ is not optimal for $(P_1)$ among the non-negative functions of
$\mathcal{E}_{pw}(\Omega)$.
\end{example}
\section{The general case}\label{sec-gen-case}
We now turn to the general case 
where $\Omega$ is any bounded open connected subset of $\R^N$ with Lipschitz boundary. 
In the previous section we saw that the functional 
$\F$ attains its minimum in $\E_{pw}(\om)$, if the set $\Omega$ satisfies assumption $(H)$. 
In the general case 
we can't expect  $\F$ to be finite in $\mathfrak{S}(\om),$ as the 
following example shows.
\begin{example}
Let $\Omega=(0,1)^2$: we claim that 
$\F(v)=+\infty$ for every $v \in \mathfrak{S}(\om).$
Indeed, let us consider, for $t\in (0,1/2)$ fixed, the points 
$(s,t)$ in $\Omega$ with $s \in (0,1)$. 
Then for almost every $t \in (0,1/2)$ the
function $w_t : s \mapsto v(s,t)$ has the following properties: 
$w_t'=\frac{\partial v}{\partial x_1}(\,\cdot\,,t),$ 
$|w_t'|=1$ a.e., $w_t(0)=w_t(1)=0$ and $|w_t|\leq t$, 
due to Lemma \ref{Ebound}.
This implies that $w_t'$ has at least $\left[\frac{1}{2t}\right]$ jumps,
if $[x]$ denote the integer part of a real number $x$. 
Therefore, applying Lemma \ref{federer}, we have
$$
\Haus^1(\Saut_1^v) \,\, \geq \,\, 
\int_0^{1/2}
\mathcal{H}^0(\Saut_1^v\cap((0,1)\times {t}))dt
\,\, \geq \,\,
 \int_0^{1/2}\left[\frac{1}{2t}\right]dt \,\, =\,\, +\infty\,,
$$
and so $\F(v)=\displaystyle \sum_{i=1}^2 \Haus^1(\Saut_i^v)=+\infty\,.$
\end{example}
In view of the last example, a possible way to generalize the analysis
of the previous section is to measure the jump set 
of $Du$ for $u  \in \mathfrak{S}(\Omega)$ through a ``weight'' 
function $h \in C_0(\Omega)$ which penalizes the jumps of $Du$ in the interior
of $\Omega$.

We propose the following construction for the function $h$.
Let $(\om_n)_{n \geq 1}$ be a sequence of open subsets of $\Omega$ satisfying hypothesis $(H)$
and approximating $\om$ in the following way:
$$
\forall n \geq 1  \quad \quad \frac{1}{n+1/2} \leq
d(\om_n,\dom) \leq \frac{1}{n}.
$$
We set $\om_0 = \emptyset$. 
Notice that the sequence $(\om_n)_{n \geq 1}$ is increasing,
and that for any $n \geq 1$ the open set $\omega_n := \om_n \setminus \overline{\om_{n-1}}$ satisfies
hypothesis $(H)$. Then for any $n \geq 1$, consider the problem 
\[
(P_1^n)\quad\quad\quad \inf\left\{\,\sum_{i=1}^N \Haus^{N-1}(\Saut_i^v)
\, : \, v \in \mathcal{E}_{pw}(\omega_n) \right\}.
\]
It follows from Theorem \ref{existence_particular_case} that $\displaystyle \delta_n:=\inf(P_1^n) < +\infty$
for any fixed $n \geq 1$.
We now define $h \in C_0(\om, \,]0,+\infty[\,)$ by
$$
h(x) := 
\left\{ 
\begin{array}{l}
\vspace{2mm}
1/[4(\alpha_1+\alpha_2)] \quad \quad \quad \mbox{if $x \in \om_1$,}\\
\vspace{2mm}
\displaystyle \frac{1}{d_{l^1}(x,\dom_{n})+d_{l^1}(x,\dom_{n+1})}
\left[ \frac{d_{l^1}(x,\dom_{n+1})}{(n+1)^2 (\alpha_n + \alpha_{n+1})}
+ \frac{d_{l^1}(x,\dom_{n})}{(n+2)^2 (\alpha_{n+1} + \alpha_{n+2})} \right] \\
\quad \quad \quad \quad \quad \quad \quad \quad \mbox{if $x \in \omega_{n+1}$, $n \geq 1$,}\\
\end{array} \right.
$$
where $\alpha_n := \max\{1, \delta_n + \mathcal{H}^{N-1}(\dom_n)\}$ for any $n \geq 1$. 

Define 
$$\mathcal{E}(\Omega):=
\{v \in \mathfrak{S}(\Omega): Dv \in SBV_{loc}(\Omega)^N\}\,.$$
We will show the following result:
\begin{thm}\label{existence}
Let $\Omega$ be an open bounded connected subset of $\R^N$ 
with Lipschitz boundary.
Let $h$ defined as above. Then the variational problem 
$$
(P_h) \quad \quad \quad 
\inf\left\{\,\F_{h}(v) \,:= \,\sum^N_{i=1}\int_{{J}_i^v}
h(x) d\mathcal{H}^{N-1}(x) \, : v \in \mathcal{E}(\Omega) \right\}.
$$
has finite value and has a optimal solution.
\end{thm}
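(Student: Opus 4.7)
The plan is to apply the direct method of the calculus of variations, mirroring the structure of the proof of Theorem \ref{existence_particular_case}, but preceded by an explicit construction showing $\inf(P_h)<+\infty$ and supplemented by a localization argument that compensates for the non-uniform behaviour of $h$ near $\partial\Omega$.

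First I would prove $\inf(P_h)<+\infty$ by constructing a competitor $u^*\in\mathcal{E}(\Omega)$. For each $n\geq 1$, Theorem \ref{existence_particular_case} applied to the annular open set $\omega_n=\Omega_n\setminus\overline{\Omega_{n-1}}$ (which satisfies $(H)$ by construction) yields an optimal solution $u_n\in\mathcal{E}_{pw}(\omega_n)$ of $(P_1^n)$, with $u_n=0$ on $\partial\omega_n=\partial\Omega_n\cup\partial\Omega_{n-1}$ and $\sum_i\Haus^{N-1}(\Saut_i^{u_n})\leq\delta_n$. Gluing the $u_n$'s defines $u^*\in W^{1,\infty}_0(\Omega)$ (continuous since each piece vanishes on the common interfaces) which satisfies $|\partial u^*/\partial x_i|=1$ a.e.\ in $\Omega$ (the interfaces $\partial\Omega_n$ are Lebesgue-null) and $Du^*\in SBV_{loc}(\Omega)^N$. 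Splitting the jump set of $u^*$ into its part inside each $\omega_{n+1}$ and the interface pieces on $\partial\Omega_n$, and using the convex-combination formula for $h$, one gets $h\leq 1/[(n+1)^2\alpha_{n+1}]$ on $\omega_{n+1}$ and $h=1/[(n+1)^2(\alpha_n+\alpha_{n+1})]$ on $\partial\Omega_n$; with the definition of $\alpha_n$, both the interior and interface contributions over $\omega_{n+1}$ are bounded by a constant multiple of $1/(n+1)^2$, so that $\F_h(u^*)<+\infty$.

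Next I would run the direct method. Let $(v^k)_k\subset\mathcal{E}(\Omega)$ be a minimizing sequence. By Lemma \ref{Ebound} and $|\nabla v^k|\leq\sqrt{N}$, the sequence is uniformly bounded and equicontinuous, so up to a subsequence $v^k\to v^\infty$ in $C_0(\Omega)$ and weakly* in $W^{1,\infty}(\Omega)$ for some $v^\infty\in W^{1,\infty}_0(\Omega)$. To show $v^\infty\in\mathcal{E}(\Omega)$, I would work on an exhausting sequence of compact sets $K\subset\Omega$: since $h$ is continuous and strictly positive, $m_K:=\inf_K h>0$, hence
$$\Haus^{N-1}(\Saut_i^{v^k}\cap K)\leq\frac{1}{m_K}\F_h(v^k)\leq\frac{C}{m_K}.$$
Combined with the uniform $L^\infty$ bound, this yields a uniform $BV(K)$ bound on each $\partial v^k/\partial x_i$. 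Applying the $BV$-compactness and the SBV closure theorems on each $K$, together with a diagonal extraction, one obtains $\partial v^k/\partial x_i\to\partial v^\infty/\partial x_i$ in $L^1_{loc}(\Omega)$ with $\partial v^\infty/\partial x_i\in SBV_{loc}(\Omega)$. Since the approximants take values in $\{-1,+1\}$, the $L^1_{loc}$ limit does too, so $v^\infty\in\mathcal{E}(\Omega)$.

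Finally, I would establish lower semicontinuity by applying Ambrosio's SBV lower semicontinuity theorem localized to each compact $K\subset\Omega$ with the continuous non-negative weight $h$:
$$\int_{\Saut_i^{v^\infty}\cap K}h\,d\Haus^{N-1}\leq\liminf_{k\to\infty}\int_{\Saut_i^{v^k}\cap K}h\,d\Haus^{N-1}\leq\liminf_{k\to\infty}\int_{\Saut_i^{v^k}}h\,d\Haus^{N-1}.$$
Monotone convergence as $K\nearrow\Omega$ and summation over $i$ then give $\F_h(v^\infty)\leq\liminf\F_h(v^k)=\inf(P_h)$, so that $v^\infty$ is an optimal solution. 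The main obstacle is the finiteness step: one must check that the gluing of the $u_n$ really yields an element of $\mathcal{E}(\Omega)$ and that the \emph{artificial} jumps added on the interfaces $\partial\Omega_n$ do not accumulate. The ad hoc definition of $h$, with weights of order $1/[(n+1)^2(\alpha_n+\alpha_{n+1})]$, is designed precisely so that these interface contributions decay like $1/n^2$; matching the estimates requires a careful bookkeeping of $h$ on each annulus $\omega_n$.
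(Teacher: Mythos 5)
Your proposal is correct and follows essentially the same route as the paper's proof: the same glued competitor $u=\sum_n u_n$ with the same $1/n^2$ bookkeeping for the finiteness of $\inf(P_h)$, and the same direct method with localization to subsets compactly contained in $\Omega$ (where $h$ is bounded below) to get uniform local $BV$ bounds, $SBV$ compactness, and lower semicontinuity of the weighted jump measure. The only cosmetic difference is that you exhaust $\Omega$ by compact sets and use a diagonal extraction where the paper works with open Lipschitz subsets $\omega\subset\overline{\omega}\subset\Omega$ and takes a supremum at the end.
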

\begin{rem}
As for problem $(P_1)$, $(P_h)$ has at least two optimal solutions: if $v$ is a solution, then $-v$ 
is a solution too.
\end{rem}
\begin{rem}
If $\Omega$ satisfies hypothesis $(H)$, one can choose $h\equiv 1$ as
Theorem \ref{existence_particular_case} shows. 
\end{rem}
\begin{proof}
We divide the proof into two steps: in the first one we show that $\inf(P_h)$ is finite
and in the second one we prove the
existence of an optimal solution.

\noindent
{\it Step 1:}
We show that there exists a function $u \in \mathcal{E}(\Omega)$ such that
$\mathcal{F}_{h}(u)$ is finite.
For any $n \geq 1$ let $\omega_n$ be the sets considered in the definition of $h$.
Let $u_n$ be an optimal solution of the associated problem $(P_1^n)$, extended
by $0$ on $\R^N \setminus \omega_n$.
(such a solution exists due to Theorem \ref{existence_particular_case}).  We then define $u \in \E(\om)$ by $u := \sum_{n=1}^{+\infty} u_n(x)$,
and claim that $\F_{h}(u) < +\infty$.
Indeed, using that
$h \leq \frac{2}{\alpha_n n^2}$ on $\overline{\omega}_n$ for any $n \geq 1$, one can estimate, for every $i$
\begin{eqnarray*}
\int\limits_{{J}_i^u}h(x)\,d\mathcal{H}^{N-1}(x)
& = &
\sum\limits_{n=1}^{+\infty} \,\,\,
\int\limits_{{J}_i^u \cap \left(\omega_{n} \cup \dom_n\right)} h(x)\,d\mathcal{H}^{N-1}(x) \\
& = & \sum\limits_{n=1}^{+\infty} \left[ \,\,
\int\limits_{{J}_i^{u_n}\cap \omega_n} h(x)\,d\mathcal{H}^{N-1}(x)
\, + \, \int\limits_{{J}_i^u\cap \partial \Omega_{n}}h(x)\,d\mathcal{H}^{N-1}(x) \right] \\
& \leq & \sum\limits_{n=1}^{+\infty} \frac{2}{n^2} \left[ \,\,
\int\limits_{{J}_i^{u_n}\cap \omega_n} \frac{1}{\delta_n} \,d\mathcal{H}^{N-1}(x)
\, + \,
\int\limits_{{J}_i^u\cap \partial \Omega_{n}} \frac{1}{\mathcal{H}^{N-1}(\dom_n)} \,d\mathcal{H}^{N-1}(x) \right] \\
& \leq & \sum\limits_{n=1}^{+\infty} \frac{4}{n^2} \,\, < \,\, +\infty .
\end{eqnarray*}
As a consequence $\inf(P_h)$  has finite value.

\noindent
{\it Step 2:}
We  show that there exists an optimal solution of problem $(P_h)$
applying the direct methods of the Calculus of Variations.
Let $(v^n)_n \subset \mathcal{E}(\Omega)$ be a minimizing
sequence for $(P_h)$. As in the proof of Theorem \ref{existence_particular_case}
we can assume that, up to a subsequence, $v^n\to v^{\infty}$ in $ {C}_0({\Omega})$ and
weakly* in $W^{1,\infty}(\Omega)$ for some $v^{\infty}\in W^{1,\infty}_0(\Omega)$.
It remains to prove that $v^{\infty}$ is an optimal solution of $(P_h).$

Let us prove that $v^{\infty} \in \mathcal{E}(\Omega).$
Since $(v^n)_n$ is a minimizing sequence we can assume that there exists $C>0$ such
that $\mathcal{F}_{h}(v^n)\leq C$ for every $n \in \N$.
Fix an open subset $\omega$ of $\om$ with Lipschitz boundary and such that
$\omega\subset\bar{\omega}\subset\om$, and let $\alpha>0$ be such
that $0<\alpha\leq h\leq 1$ in $\bar{\omega}$. We can then write
$$
C\geq \mathcal{F}_{h}(v^n)\geq
\alpha\sum^N_{i=1}\int\limits_{{J}_{i}^{v^n}\cap
\omega} d\mathcal{H}^{N-1}(x), \quad \forall \,n \in \N.
$$
Let us fix $i \in \{1,\ldots,N\}$.
Since $\frac{\partial v^n}{\partial x_i}
\in SBV(\omega)$ and takes only the two values $\pm 1$ for any $n$, we can then write
$$
\norm{\frac{\partial v^n}{\partial x_i}}_{BV(\omega)}\,\,=\,\,
\mathcal{L}^N(\omega)+2\mathcal{H}^{N-1}({J}_i^{v^n} \cap \omega)\,\, \leq \,\, \mathcal{L}^N(\omega)+2 C.
$$
We now apply Theorem \ref{compactnessSBV} to the sequence
$(\frac{\partial v^n}{\partial x_i})_n$:
condition $i)$ is the inequality given above;
condition $ii)$ is straightforward since $\nabla \frac{\partial v^n}{\partial x_i} = 0$ a.e. in $\omega$;
for condition $iii)$ it is sufficient to choose $f \equiv 1$, for example,
because in this way
$$
\forall n \in \N, \quad \quad 
\int_{\Saut^{v^n}_i \cap \omega} f \left( \left[\frac{\partial v^n}{\partial x_i}\right] \right) d\mathcal{H}^{N-1}(x)
\,\,\, = \,\,\, \mathcal{H}^{N-1}(\Saut^{v^n}_i \cap \omega) \,\,\, \leq \,\,\, C.
$$
As a consequence $\frac{\partial v^n}{\partial x_i} \to g_i$ weakly*
in $BV(\omega)$ for some
function $g_i \in SBV(\omega)$ .
Since we already know that
$\frac{\partial v^n}{\partial x_i} \to \frac{\partial v^\infty}{\partial x_i}$
weakly* in $L^\infty(\om)$,
we infer that $\frac{\partial v^\infty}{\partial x_i} \in SBV(\omega)$.
Moreover $\frac{\partial v^n}{\partial x_i} \to \frac{\partial v^\infty}{\partial x_i}$
in $L^1(\omega)$
so that $|\frac{\partial v^\infty}{\partial x_i}|=1$ a.e. in $\omega$.
This being true for any index $i$ and any such open subset $\omega$ of $\Omega$,
we infer that $v^\infty$ belongs to $\E(\Omega)$.

To prove that $v^{\infty}$ is a minimizer of $\F_h$,
let us show that
\begin{equation}
\liminf\limits_{n\to \infty} \int\limits_{{J}_i^{v^n}\cap \omega} h(x)\,
d\mathcal{H}^{N-1}(x)\geq
\int\limits_{{J}_i^{v^{\infty}}\cap \omega}
h(x)\, d\mathcal{H}^{N-1}(x)\,\,\,i=1,\ldots,N
\label{fin}
\end{equation}
for any $\omega$ as above. To this aim we observe that for every $n\in \N \cup \{\infty\}$ one has
$$
\int\limits_{{J}_i^{v^{n}}\cap \omega} h(x) d\mathcal{H}^{N-1}(x)
\,\, = \,\, \frac{1}{2} \int\limits_{\omega}h(x)\, d\left|D\frac{\partial v^{n}}{\partial x_i}\right|(x).
$$
Therefore (\ref{fin}) follows from  Theorem \ref{semicontinuityBV}.
This is sufficient to conclude the proof, since
\begin{eqnarray*}
\liminf\limits_{n\to \infty} \int\limits_{{J}_i^{v^n}} h(x)\, d\mathcal{H}^{N-1}(x)
& \geq &
\sup_{\omega\subset \Omega}
\int\limits_{{J}_i^{v^{\infty}}\cap \omega} h(x)\, d\mathcal{H}^{N-1}(x)
\,\, = \,\, \int_{{J}_i^{v^{\infty}}} h(x)\, d\mathcal{H}^{N-1}(x)
\end{eqnarray*}
where the supremum is taken over the open subsets $\omega \subset \overline{\omega} \subset \Omega$
with Lipschitz boundary. We then infer from the last inequality that
$\displaystyle \liminf\limits_{n\to \infty} \mathcal{F}_{h}(v^n) \geq \mathcal{F}_{h}(v^\infty)$,
that is, $v^{\infty}$ is a minimizer of $\mathcal{F}_{h}$.
\end{proof}


\section{Open problems}\label{sec_open_problems}
We would like to present here some open problems related to our work: 
some questions about problems $(P_1)$ and $(P_h)$ and 
the question of the selection of a particular solution of the
differential  problem (\ref{pbdipart}).

\noindent
{\textbf 1.} Suppose that $\Omega$ satisfies property $(H).$ 
We showed in section \ref{sec-part-case} that there exists a function $u \in C^1_{pw}(\Omega)$ solution of problem $(P_1).$ 
Can we say that the number of the connected components of $Du$ is finite?
This seems to be natural, as shown in Proposition \ref{distanceH}. 

Another natural question is: has problem $(P_1)$ a finite number of solutions?
Does it admit just two optimal solutions (a solution and its opposite)? 
This is  true  when $\Omega$ is a rectangle and $N=2$
as shown in Proposition  \ref{viscosity_distance}. 
With the same arguments as in the proof of this proposition, 
it is easy to show that the functions $\pm u$, where $u$ is the solution defined
in Example \ref{example_Thierry}, are the only optimal solutions of problem $(P_1)$.

Another related question is to extend the result of Proposition  \ref{viscosity_distance}
to higher dimension, that is to prove that if $\Omega$ a convex open subset of $\R^N$ satisfying $(H)$
then the distance functions $\pm d_{l^1}(.,\dom)$ are the only solutions of problem $(P_1)$.
We notice that under the above hypothesis on $\Omega$, 
the function $d_{l^1}(.,\dom)$ is a concave solution of (\ref{pbdipart}): 
this  implies
 that on a non-void interval $\R \times {y} \cap \Omega$ (where $y \in \R^{N-1}$)
the partial derivative $\frac{\partial}{\partial x_1} d_{l^1}(.,\dom)$ has only one jump.
As a consequence the jump set of $\frac{\partial}{\partial x_1}d_{l^1}(.,\dom)$
is also in some sense minimal since on each such interval it is reduced to one point, and
so it has minimal cardinality.

\vspace{0.3cm}
\noindent{\textbf 2.}
In section \ref{sec-gen-case} we studied the general case,
that is when $\Omega$ is any subset of $\R^N$.
A natural question  is the study of the
asymptotic behavior of a family $(u_h)_h$ of the optimal solutions of $(P_h)$
when $h \to 1$ locally uniformly in $\Omega$. The limit problem is
then obviously $(P_1)$, which in general is ill-posed (since it has no
feasible solution), but one expects $u_h$ to converge uniformly 
to a function 
$u \in \E(\Omega)$ having some local minimization properties in the open
subsets $\omega\subset \overline{\omega}\subset\Omega$.

\vspace{0.3cm}
\noindent
{\textbf 3.} In section  \ref{sec-gen-case} we saw that for a general open set $\Omega$,
$\mathcal{F}(v)$ may be infinite for any $v \in \mathfrak{S}(\Omega)$.
This means that for some index $i$ one has $\mathcal{H}^{N-1}(J_i^v)=\infty$. 
On the other hand we know that $\mathcal{H}^{N}(J_i^v)=0$, so a natural question
is to identify the Hausdorff dimension $s\in [N-1,N]$ for which $\mathcal{H}^{t}(J_i^v)$
is finite if $t<s$, infinite if $t>s$. Then one may try to minimize this dimension $s$
over $\mathfrak{S}(\Omega)$, and then proceed as in section \ref{sec-part-case}
with the optimal $s$ in place of $N-1$.

\vspace{0.3cm}
\noindent
{\textbf 4.} In this article we proposed to minimize the measure of the discontinuity 
sets of the gradients of the elements of $\mathfrak{S}(\Omega)$ through the functionals 
$\mathcal{F}$ and $\mathcal{F}_{h}.$ 
Now, let $\{G_i\}_{i \in I}$  be the connected components of the $Du^{-1}(E)$
for $u \in \mathfrak{S}(\Omega)$, where we recall that $E=\{x:\forall i, \,\,|x_i|=\frac{1}{\sqrt{N}}\}$.
One could instead consider functionals of the form 
\[
\sum_{i\in I}g(\mathcal{H}^{N-1}(G_i))\quad with \quad
g(t)=
\left\{
\begin{array}{ll}
t^\beta, & t\leq 1
\\
t, & t\geq 1
\end{array}
\right. 
\] 
where $\beta \in ]0,1[\,$. 
The advantage of this last type of functional with respect to our choice 
is that
 we can penalize the number of components $G_i$ thanks to $g$.  

\vspace{0.3cm}
\noindent
{\textbf 5.}  One of the aim of this work was to select a particular
solution of $(\ref{pbdipart})$.
The idea developed in the previous section is first to restrict ourselves to those
solutions of (\ref{pbdipart}) that minimize problem $(P_h)$.
Now how can we select a particular solution of $(P_h)$ (up to its sign)? 
Does problem $(P_h)$ admit more than two solutions? 
Notice that in any case the selection of these two solutions of our problem
$(\ref{pbdipart})$ depends on the choice of $h$, so that it would be even
more interesting to select a solution obtained by letting $h$ go to $1$
(see {\bf 4.} above).

In the case where problem $(P_h)$ admits more than two solutions
one can think of the following way to select a particular solution.
Let $\mathcal{S}_0$ be the set of solutions of problem $(P_h)$,
and let $(f_n)_{n \geq 1}$ be an orthogonal basis of $L^2(\Omega)$ 
starting from $f_1 =1$.
With the same arguments as in the proof of Theorem \ref{existence},
it is not difficult to show that problem
$$
(B_1)\quad\quad
\sup\left\{\int_{\Omega}u f_1, u \in \mathcal{S}_0\right\}
$$
admits a solution. 
Let $\mathcal{S}_1$ be the set of optimal solutions of problem $(B_1)$. 
If $\mathcal{S}_1$ is composed by more then one function, we consider
$$
(B_2)\quad\quad
\sup\left\{\int_{\Omega}u f_2, u \in \mathcal{S}_1\right\}
$$
and so on. If two functions $u, v \in \mathcal{S}_0$ satisfy
$$
\int_{\Omega}u f_i =\int_{\Omega}v f_i  \quad \quad \forall i\geq 1
$$
then one gets $u=v$. Therefore this method allows us to select 
a particular solution of $(P_h)$.

\section{Appendix}
We recall here some compactness and semicontinuity results about 
$BV$,  
$SBV$ and piecewise constant functions; for the proofs, 
see respectively Theorem 3.23 in \cite{ambrosio_fusco_pallara}, 
Theorem 1.4 in \cite{almant}, Theorem 4.25 and Theorem 2.38 in \cite{ambrosio_fusco_pallara}. 
\begin{thm}[compactness for $BV$ functions]\label{compactnessBV}
Let $\Omega$ be a bounded open subset of $\R^N$ with Lipschitz boundary.
Assume that $(u_n)_n$ is an  uniformly bounded 
sequence in $BV(\Omega)$. Then 
there exist a subsequence $(u_{n_k})_k$ and 
a function $u \in BV(\Omega)$ such that
$u_{n_k} \to u$ weakly*
in $BV(\Omega)$.
\end{thm}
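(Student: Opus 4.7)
The plan is to combine strong $L^1$--compactness of bounded $BV$--sequences with weak--$\ast$ compactness of uniformly bounded Radon measures, and then to identify the limit of the derivatives as the derivative of the $L^1$--limit. Recall that weak--$\ast$ convergence in $BV(\Omega)$ means $u_n \to u$ in $L^1(\Omega)$ together with $Du_n \overset{\ast}{\rightharpoonup} Du$ in $\mathcal{M}(\Omega,\R^N)$, so these are the two properties to establish.

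First I would invoke the compact embedding $BV(\Omega) \hookrightarrow L^1(\Omega)$, valid because $\Omega$ is bounded with Lipschitz boundary: since $(u_n)_n$ is uniformly bounded in $BV(\Omega)$, I can extract a subsequence (not relabeled) and find $u\in L^1(\Omega)$ with $u_n\to u$ in $L^1(\Omega)$. The standard lower semicontinuity of the total variation under $L^1$--convergence then gives
\[
|Du|(\Omega) \,\,\leq\,\, \liminf_{n\to\infty} |Du_n|(\Omega) \,\,<\,\, +\infty,
\]
so that $u\in BV(\Omega)$, which provides the desired $L^1$--convergence part of the weak--$\ast$ statement.

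Next I would upgrade to convergence of the derivatives as measures. Since $(|Du_n|(\Omega))_n$ is bounded, the sequence $(Du_n)_n$ lies in a norm--bounded subset of $\mathcal{M}(\Omega,\R^N) \simeq C_0(\Omega,\R^N)^\ast$, so by the Banach--Alaoglu theorem a further subsequence satisfies $Du_n \overset{\ast}{\rightharpoonup} \mu$ for some vector Radon measure $\mu$. To identify $\mu$ with $Du$, I would test against $\varphi \in C_c^\infty(\Omega,\R^N)\subset C_0(\Omega,\R^N)$ and use integration by parts:
\[
\int_\Omega \varphi \cdot dDu_n \,\,=\,\, -\int_\Omega u_n \,\mathrm{div}\,\varphi \, dx \,\,\longrightarrow\,\, -\int_\Omega u \,\mathrm{div}\,\varphi \, dx \,\,=\,\, \int_\Omega \varphi \cdot dDu,
\]
where the convergence uses $u_n\to u$ in $L^1(\Omega)$. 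By weak--$\ast$ convergence the left-hand side also tends to $\int_\Omega \varphi\cdot d\mu$, so $\mu$ and $Du$ agree on $C_c^\infty(\Omega,\R^N)$ and hence, by density in $C_0(\Omega,\R^N)$, as Radon measures.

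The main obstacle is the compact embedding $BV(\Omega) \hookrightarrow L^1(\Omega)$, which genuinely uses the Lipschitz boundary hypothesis. The standard route is to extend each $u_n$ to a function in $BV(\R^N)$ with compact support via a bounded extension operator (available precisely because $\dom$ is Lipschitz), smooth by convolution with a mollifier to approximate in $L^1$ while controlling the total variation, and then apply Rellich--Kondrachov to the resulting $W^{1,1}$--approximants together with an $\eps$--net argument. Once this compactness is in hand, the rest of the proof reduces to the two soft ingredients above (Banach--Alaoglu and the integration by parts identification).
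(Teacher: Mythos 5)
Your proof is correct and is essentially the standard argument behind the result the paper simply cites without proof (Theorem 3.23 in Ambrosio--Fusco--Pallara): compact embedding $BV(\Omega)\hookrightarrow L^1(\Omega)$ via extension and mollification, Banach--Alaoglu for the derivative measures, and identification of the weak--$\ast$ limit with $Du$ by integration by parts. No gaps; the Lipschitz-boundary hypothesis is used exactly where you say it is.
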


\begin{thm}[compactness for $SBV$ functions]\label{compactnessSBV}
Let $\Omega$ be a bounded open subset of $\R^N$ with Lipschitz boundary.
Let $(u_n)_n$ be a sequence of functions in $SBV(\Omega)$ and assume that
\\
$i)$ the functions $u_n$ are uniformly bounded in $BV(\Omega)$;
\\
$ii)$ the gradients $\nabla u_n$ are equi-integrable;
\\
$iii)$ there exists a function $f:[0,\infty)\to [0,\infty]$ such that
$f(t)/t \to \infty$ as $t \to 0^+$ and
$$
 \int_{\Saut_{u_n}}f([u_n])d\mathcal{H}^{N-1}
\leq C<\infty\quad \forall n \in \N.
$$
Then there exists a subsequence $(u_{n_k})_k$ and a function $u \in SBV(\Omega)$ such that
$u_{n_k}\to u$ weakly*  in $BV(\Omega)$,
with the Lebesgue and jump parts of the
derivatives converging separately, i.e. $D_a u_{n_k}\to D_a u$ and $D_j u_{n_k}\to D_j u$
weakly* in $\mathcal{M}(\om,\R^N)$.
\end{thm}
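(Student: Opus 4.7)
The plan is to bootstrap from the $BV$ compactness theorem. By assumption $i)$ and Theorem \ref{compactnessBV}, up to a subsequence there exists $u \in BV(\om)$ with $u_n \to u$ weakly* in $BV(\om)$. What remains is to upgrade this to $u \in SBV(\om)$ and to prove the separate weak* convergence of the absolutely continuous and jump parts of $Du_n$.

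First I would handle the absolutely continuous parts. Assumption $ii)$ asserts that the densities $\nabla u_n$ are equi-integrable in $L^1(\om;\R^N)$, so by the Dunford--Pettis theorem a subsequence converges weakly in $L^1$ to some $g \in L^1(\om;\R^N)$; equivalently $D_a u_n = \nabla u_n \, \Leb^N \to g\,\Leb^N$ weakly* in $\mathcal{M}(\om,\R^N)$. For the singular parts, $i)$ gives the uniform bound $|D_j u_n|(\om) \leq |Du_n|(\om) \leq C$, so weak* compactness of bounded Radon measures yields, along a further subsequence, $D_j u_n \to \mu$ weakly* in $\mathcal{M}(\om,\R^N)$ for some bounded Radon measure $\mu$. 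Passing to the limit in $Du_n = D_a u_n + D_j u_n$ then identifies $Du = g\,\Leb^N + \mu$.

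The main obstacle is to show that $\mu$ coincides with the jump part $D_j u$ of a function $u \in SBV(\om)$, i.e.\ that $Du$ has no Cantor component and that $\mu$ is concentrated on an $(N-1)$-rectifiable set. This is the heart of Ambrosio's $SBV$ compactness theorem and is where assumption $iii)$ enters essentially: the superlinearity $f(t)/t \to \infty$ as $t \to 0^+$ is what prevents small jumps of the $u_n$ from accumulating into a Cantor or absolutely continuous component of $\mu$. The argument I would follow combines two ingredients: a localisation/slicing reduction to the one-dimensional case (using Fubini-type decompositions of $BV$ functions along coordinate directions), and the lower semicontinuity along $BV$-weak* convergent sequences of the functional $v \mapsto \int_{\Saut_v} f([v]) d\Haus^{N-1}$. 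The uniform bound in $iii)$ passed to the one-dimensional slices together with the superlinearity of $f$ at $0$ shows that in the limit slices the singular part is purely atomic; by Fubini reconstruction, this forces $\mu$ to live on an $(N-1)$-rectifiable set and to be of the form $[u]\nu_u \Haus^{N-1}\lfloor \Saut_u$.

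Once $u \in SBV(\om)$ is established and $Du = g\,\Leb^N + [u]\nu_u \Haus^{N-1}\lfloor \Saut_u$, the mutual singularity of $\Leb^N$ and $\Haus^{N-1}\lfloor \Saut_u$ forces the identifications $D_a u = g\,\Leb^N$ and $D_j u = \mu$, and therefore the separate convergence $D_a u_{n_k} \to D_a u$ and $D_j u_{n_k} \to D_j u$ weakly* in $\mathcal{M}(\om,\R^N)$, which concludes the proof.
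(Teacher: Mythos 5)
First, a point of reference: the paper does not prove this statement. It appears in the appendix as a quoted result --- Ambrosio's $SBV$ compactness theorem --- with the proof explicitly delegated to Theorem 1.4 of \cite{almant}. So there is no internal proof to compare yours against; what can be judged is whether your outline would itself constitute a proof. Your first and last paragraphs are correct and standard: Dunford--Pettis applied to $(\nabla u_n)_n$ under $ii)$, weak* compactness of the uniformly bounded measures $D_j u_n$ under $i)$, and the uniqueness of the Lebesgue decomposition at the end are exactly how the peripheral parts of the argument go.

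The genuine gap is your third paragraph, which is where the entire content of the theorem is concentrated. You must show that the weak* limit $\mu$ of the jump measures has no diffuse component (neither absolutely continuous nor Cantor) and is carried by a rectifiable set on which it equals $[u]\nu_u\,\Haus^{N-1}\lfloor \Saut_u$; a weak* limit of purely atomic or jump-type measures can perfectly well be diffuse, so this does not come for free. You correctly name the strategy (slicing to dimension one, plus lower semicontinuity of $v\mapsto\int_{\Saut_v}f([v])\,d\Haus^{N-1}$ under weak* $BV$ convergence), but you do not execute any of it, and the one tool you invoke as a black box --- that lower semicontinuity --- is a theorem of essentially the same depth as the compactness statement and is normally proved in tandem with it (both in Ambrosio's original papers and in \cite{ambrosio_fusco_pallara}). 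As written, the argument is therefore either circular or a reduction of the theorem to an equally hard unproved claim. Closing the gap requires the actual one-dimensional work: identifying the sections of $u_n$ along lines as one-dimensional $SBV$ functions whose absolutely continuous and atomic parts are the corresponding sections of $D_a u_n$ and $D_j u_n$, transferring the bound $iii)$ to almost every line by a Fatou argument, and proving the quantitative one-dimensional lemma that a uniform bound on $\sum_k f(|\text{jump}_k|)$ with $f(t)/t\to\infty$ as $t\to 0^+$ forbids the atoms from collapsing into a diffuse limit. None of this is present in the proposal.
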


\begin{thm}[compactness for piecewise constant functions]
\label{piecewise_constant_compactness}
Let $\Omega$ be a bounded open subset of $\R^N$ with Lipschitz boundary.
Let $(u_n)_{n}$ be a 
sequence of piecewise constant functions in $\Omega$ such that 
$\norm{u_n}_{L^{\infty}(\Omega)}+\mathcal{H}^{N-1}(J_{u_n})$ is uniformly
bounded. 
Then there exist a subsequence $(u_{n_k})_k$ 
and a piecewise constant function $u$ such that $u_{n_k} \to u$ 
in measure.
\end{thm}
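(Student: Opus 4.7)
The plan is to combine the $BV$ and $SBV$ compactness statements of the appendix with the structural fact that a bounded $SBV$ function whose absolutely continuous part vanishes must be piecewise constant.

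\emph{Uniform $BV$ bound.} Since each $u_n$ is piecewise constant in $\Omega$, it belongs to $SBV(\Omega)$ with $\nabla u_n = 0$ almost everywhere, and its distributional derivative is the purely jump measure $Du_n = [u_n]\nu_{u_n}\Haus^{N-1}\lfloor \Saut_{u_n}$. The estimate $|[u_n]| \leq 2\|u_n\|_{L^\infty(\Omega)}$ then gives
$$
|Du_n|(\Omega) \,\leq\, 2 \norm{u_n}_{L^\infty(\Omega)}\,\Haus^{N-1}(\Saut_{u_n}),
$$
whose right-hand side is uniformly bounded by hypothesis. Together with the $L^1$ bound coming from the $L^\infty$ control on the finite-measure set $\Omega$, this yields uniform boundedness of $(u_n)_n$ in $BV(\Omega)$. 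Theorem \ref{compactnessBV} then furnishes a subsequence $u_{n_k}$ converging weakly* in $BV(\Omega)$ to some $u \in BV(\Omega)\cap L^\infty(\Omega)$; in particular $u_{n_k} \to u$ in $L^1(\Omega)$, hence up to a further extraction almost everywhere and in measure.

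\emph{Identifying the limit as an $SBV$ function with $\nabla u = 0$.} I would next apply Theorem \ref{compactnessSBV} along the same subsequence. Condition (i) is the bound just established; condition (ii) is trivial because every $\nabla u_{n_k}$ vanishes and hence is uniformly equi-integrable; for condition (iii) I would take $f \equiv 1$ on $(0,+\infty)$ with $f(0)=0$, so that $f(t)/t \to +\infty$ as $t \to 0^+$ and
$$
\int_{\Saut_{u_{n_k}}} f([u_{n_k}])\,d\Haus^{N-1} \,=\, \Haus^{N-1}(\Saut_{u_{n_k}})
$$
is uniformly bounded. The theorem gives $u \in SBV(\Omega)$ together with the separate weak* convergence $D^a u_{n_k} \to D^a u$; since $D^a u_{n_k} = 0$ for every $k$, one obtains $\nabla u = 0$ almost everywhere in $\Omega$.

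\emph{Upgrading to piecewise constancy (the main obstacle).} The remaining and delicate point is to upgrade ``$u\in SBV(\Omega)\cap L^\infty(\Omega)$ with $\nabla u=0$'' to ``$u$ is piecewise constant'' in the sense of Definition \ref{piecewise_constant_maps_definition}. My plan is to use the coarea formula in $BV$: for $\Leb^1$-almost every level $s \in \R$, the superlevel set $\{u > s\}$ has finite perimeter in $\Omega$, and
$$
\int_{\R} |D\chi_{\{u>s\}}|(\Omega)\,ds \,=\, |Du|(\Omega) \,<\, +\infty.
$$
Selecting a countable dense family of admissible levels in the essential range of $u$ and taking the common refinement of the associated Caccioppoli sets produces a Caccioppoli partition $\{E_i\}_{i\in I}$ of $\Omega$; since $\nabla u = 0$ almost everywhere, $u$ cannot vary within any atom of positive Lebesgue measure, so $u$ is constant on each $E_i$ and is therefore piecewise constant. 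This structural characterization of $SBV$ functions with vanishing absolutely continuous gradient (essentially Theorem 4.23 in \cite{ambrosio_fusco_pallara}) is the step I expect to require the most care; once it is in hand, the convergence already established along the subsequence completes the proof.
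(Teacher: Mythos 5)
The paper itself does not prove this statement; it quotes it as Theorem 4.25 of \cite{ambrosio_fusco_pallara}, and your argument follows the same standard route: a uniform $BV$ bound, the $BV$ and $SBV$ compactness theorems, and then the characterization of piecewise constant functions as bounded $SBV$ functions with vanishing approximate gradient and jump set of finite $\Haus^{N-1}$ measure. The first two steps are correct as written: the product bound $|Du_n|(\Omega)\leq 2\norma{u_n}_{L^\infty}\Haus^{N-1}(\Saut_{u_n})$ is uniformly controlled by the hypothesis, your choice of $f$ is admissible in Theorem \ref{compactnessSBV}, and $L^1$ convergence on a set of finite measure does give convergence in measure.

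There is, however, a genuine gap in the last step. The characterization you invoke (Theorem 4.23 of \cite{ambrosio_fusco_pallara}) requires $\Haus^{N-1}(\Saut_u)<+\infty$ for the limit $u$, and you never establish this. It does not follow from $u\in SBV(\Omega)\cap L^\infty(\Omega)$ with $\nabla u=0$ and $|Du|(\Omega)<+\infty$: since $|Du|(\Omega)=\int_{\Saut_u}|[u]|\,d\Haus^{N-1}$, the jump amplitudes may degenerate and $\Haus^{N-1}(\Saut_u)$ may be infinite (already in dimension one, a bounded $SBV$ function with countably many small jumps accumulating on a dense set has $\nabla u=0$ but is not piecewise constant in the sense of Definition \ref{piecewise_constant_maps_definition}). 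Your coarea sketch hides the same issue: the common refinement of countably many level sets $\{u>s_j\}$ is a Caccioppoli partition only because $\partial^*\{u>s\}\subset \Saut_u$ up to $\Haus^{N-1}$-negligible sets (this uses $\nabla u=0$), so that $\sum_i|D\chi_{E_i}|(\Omega)\leq 2\Haus^{N-1}(\Saut_u)$; without finiteness of $\Haus^{N-1}(\Saut_u)$ the refinement need not be Caccioppoli. The missing ingredient is the lower semicontinuity of the jump energy along the convergent subsequence, namely $\int_{\Saut_u}f([u])\,d\Haus^{N-1}\leq\liminf_k\int_{\Saut_{u_{n_k}}}f([u_{n_k}])\,d\Haus^{N-1}$ with your $f$, which gives $\Haus^{N-1}(\Saut_u)\leq\liminf_k\Haus^{N-1}(\Saut_{u_{n_k}})<+\infty$. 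This semicontinuity is part of Ambrosio's closure theorem but is not contained in the statement of Theorem \ref{compactnessSBV} as reproduced in the appendix, so you must quote it (or prove it) separately before the final step goes through.
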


\begin{thm}[semicontinuity in $BV$]\label{semicontinuityBV}
Let $\Omega$ be a bounded open subset of $\R^N$.
Let $(u_n)_n$ be a sequence of functions in $BV(\Omega)$ such that $u_n \to u$ 
weakly* in $BV(\Omega).$ Then
$$
\int_\om f(x) d|D_j u|(x) \leq \liminf_{n \to \infty}\int_\om f(x) d|D_j u_{n}|(x)
$$
for any non-negative continuous function $f: \om \to [0,+\infty[\,$.
\end{thm}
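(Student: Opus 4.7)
The plan is to combine two classical ingredients: (i) weak* convergence in $BV(\Omega)$ entails, by definition, weak* convergence of the derivative measures $Du_n \rightharpoonup^* Du$ in $\mathcal{M}(\Omega,\R^N)$, and (ii) the Portmanteau-type lower semicontinuity for positive Radon measures, that is, if $\mu_n \rightharpoonup^* \mu$ in $\mathcal{M}(\Omega)$ then $\int_\Omega g\,d\mu \leq \liminf_n \int_\Omega g\,d\mu_n$ for every non-negative lower semicontinuous $g$. The whole game is to bridge these two via the jump parts $|D_j u_n|$.

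First I would observe that the total variations $|Du_n|(\Omega)$ are uniformly bounded (by the uniform BV bound that is hidden in the weak* convergence hypothesis), and hence so are the jump parts, since $|D_j u_n|(\Omega) \leq |Du_n|(\Omega)$. By Banach--Alaoglu, up to extracting a subsequence (still indexed by $n$), there exists a non-negative Radon measure $\nu$ on $\Omega$ with $|D_j u_n| \rightharpoonup^* \nu$. The crux of the argument would then be to establish the measure inequality
$$
|D_j u| \,\leq\, \nu \qquad \textrm{in } \mathcal{M}(\Omega).
$$
Granted this, the Portmanteau inequality applied to the non-negative continuous $f$ gives
$$
\int_\Omega f\,d|D_j u| \;\leq\; \int_\Omega f\,d\nu \;\leq\; \liminf_{n \to \infty} \int_\Omega f\,d|D_j u_n|
$$
along the chosen subsequence, and a routine subsequence-of-subsequence argument upgrades this to the liminf inequality along the original sequence.

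The main obstacle is establishing the measure inequality $|D_j u| \leq \nu$. This is the delicate part, because weak* $BV$ convergence does not, a priori, separate the absolutely continuous, Cantor, and jump components of the Radon--Nikodym decomposition of $Du$; the behaviour of $|D_j u_n|$ alone is not directly dictated by $Du_n \rightharpoonup^* Du$. To handle this I would rely on the rectifiable structure of $J_u$: either a blow-up argument at $\mathcal{H}^{N-1}$-a.e.\ point of $J_u$ (comparing one-sided traces of $u$ with limits of traces of $u_n$ on small discs transverse to $J_u$), or the one-dimensional slicing theory, which reduces the statement to lower semicontinuity of the cardinality of jumps on one-dimensional sections. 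This is precisely the technical content of Theorem 2.38 in \cite{ambrosio_fusco_pallara}, to which the statement is attributed. I should note that in the two applications made earlier in the paper (inside the proofs of Theorems \ref{existence_particular_case} and \ref{existence}), the sequences considered have $\frac{\partial v^n}{\partial x_i}$ piecewise constant with values $\pm 1$, so $|D(\partial v^n / \partial x_i)| = |D_j (\partial v^n / \partial x_i)|$, and the standard Reshetnyak lower semicontinuity for the total variation measure suffices without having to isolate the jump part.
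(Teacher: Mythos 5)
The pivotal step of your argument --- the measure inequality $|D_j u| \leq \nu$ for a weak* cluster point $\nu$ of the jump parts $|D_j u_n|$ --- is false, and no blow-up or slicing argument will rescue it: weak* convergence in $BV$ does not prevent diffuse mass from concentrating onto the jump set of the limit. Take $\om = \,]-1,1[\,$ and $u_n(x)=\min(1,\max(0,nx))$. Then $u_n \to u:=\chi_{]0,1[}$ weakly* in $BV(\om)$, each $Du_n$ is absolutely continuous so that $|D_j u_n|=0$ and $\nu=0$, yet $|D_j u|=\delta_0$; with $f \equiv 1$ the asserted inequality would read $1 \leq 0$. So the theorem as literally written fails for general weak*-convergent $BV$ (even $SBV$) sequences, and the part of your proposal devoted to proving $|D_j u|\leq\nu$ cannot be completed. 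In particular, Theorem 2.38 of \cite{ambrosio_fusco_pallara}, to which the paper attributes the result and to which you defer for ``the technical content'', is Reshetnyak's lower semicontinuity theorem: it yields $\int_\om f\,d|Du| \leq \liminf_n \int_\om f\,d|Du_n|$ for the \emph{full} variation measures, not an inequality between the jump parts.

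Your closing caveat is in fact the entire intended proof. In the two places where the lemma is invoked (Theorems \ref{existence_particular_case} and \ref{existence}), it is applied to $u_n = \frac{\partial v^n}{\partial x_i}$ with values in $\{\pm 1\}$, vanishing approximate gradient and no Cantor part, so that $Du_n = D_j u_n$ and $\int_\om f\,d|D_j u_n| = \int_\om f\,d|Du_n|$; on the limit side one only needs the trivial bound $|D_j u| \leq |Du|$. Chaining these observations with Reshetnyak's inequality (or, for $f$ continuous and non-negative, with the elementary lower semicontinuity of $\mu \mapsto \int f\,d|\mu|$ under weak* convergence of vector measures, obtained by duality against continuous vector fields $\varphi$ with $|\varphi|\leq f$) gives exactly the displayed conclusion in the only situations where it is used. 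The statement should therefore either carry $|Du_n|$ on the right-hand side, or include the hypothesis $Du_n = D_j u_n$; as it stands, your main line of attack pursues a claim that does not hold.
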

\vspace{0.3cm}

\textit{Acknowledgments.}
The authors wish to express their thanks to Gr\'egoire Charlot,  
Bernard Dacorogna 
and J\'er\^ome Droniou for some stimulating conversations.

\bibliographystyle{plain}
\bibliography{eikonal}

\end{document}